\documentclass[10pt]{article}
\usepackage[a4paper]{geometry}

\usepackage[T1]{fontenc}
\usepackage[utf8]{inputenc}
\usepackage[british]{babel}
\usepackage[dvipsnames,svgnames,table]{xcolor}
\usepackage{lmodern}
\usepackage{csquotes}
\usepackage{amsmath,amsthm,amsfonts,amssymb,mathtools}
\usepackage{enumitem}
\usepackage{thmtools}
\usepackage{commath}
\usepackage{nicefrac}
\usepackage{stmaryrd}
\usepackage{etoolbox}
\usepackage{booktabs}
\usepackage{multirow}
\usepackage{hyperref}
\usepackage[capitalize]{cleveref}
\usepackage{graphicx}
\usepackage{subcaption}
\usepackage{todonotes}
\usepackage{tabularray}
\usepackage{tikz}
\usepackage{cite}

\usepackage{tikz-3dplot}
\usetikzlibrary{positioning, shapes, arrows, calc, arrows.meta, patterns, shadows, trees, intersections}

\DeclareMathOperator*{\argmin}{arg\,min}

\newtheorem{theorem}{Theorem}[section] %
\newtheorem{lemma}[theorem]{Lemma}

\newtheorem{remark}[theorem]{Remark}

\newtheorem{corollary}[theorem]{Corollary}
\newtheorem{assumption}[theorem]{Assumption}

\newcommand{\diam}{\mathrm{diam}}

\newcommand{\Th}{{\mathcal{T}_h}}
\newcommand{\elm}{T}                %
\newcommand{\Fh}{{\mathcal{F}_h}}
\newcommand{\Fhi}{{\mathcal{F}_h^{\text{i}}}}
\newcommand{\Fhb}{{\mathcal{F}_h^{\text{b}}}}

\newcommand{\dx}{\mathop{\mathrm{d} x}}

\newenvironment{eqs} %
 { \begin{equation} \begin{aligned} } %
 { \end{aligned} \end{equation} \ignorespacesafterend } %

\renewcommand{\norm}[1]{\lVert #1 \rVert}
\newcommand{\dgnorm}[1]{\norm{#1}_{\IX}}
\newcommand{\dgsnorm}[1]{\norm{#1}_{\IX,\ast}}
\newcommand{\vdgnorm}[1]{\norm{#1}_{1,h}}

\newcommand{\stablocal}[1]{\alpha_{\scalebox{0.6}{$K_{#1}$}}}
\newcommand{\contlocal}[1]{\beta_{\scalebox{0.6}{$K_{#1}$}}}

\newcommand{\contoseen}[1]{\beta_{\scalebox{0.6}{$k_{#1}$}}}
\newcommand{\staboseenr}[1]{\alpha_{\scalebox{0.6}{$\underline{k}_{#1}$}}}
\newcommand{\contoseenr}[1]{\beta_{\scalebox{0.6}{$\underline{k}_{#1}$}}}
\newcommand{\contcross}[1]{\beta_{\scalebox{0.6}{$\times_{#1}$}}}

\newcommand{\conttrefftz}[1]{\beta_{\scalebox{0.6}{$c_{#1}$}}}
\newcommand{\stabtrefftz}[1]{\alpha_{\scalebox{0.6}{$c_{#1}$}}}
\newcommand{\stabtrefftzr}[1]{\alpha_{\scalebox{0.6}{$\underline{c}_{#1}$}}}

\newcommand{\stabsolr}[1]{C_{\scalebox{0.6}{$\underline{S}_{#1}$}}}
\newcommand{\banach}{\Lambda_*}
\newcommand{\Cres}[1]{C_{\texttt{res},#1}}
\newcommand{\contconv}{\beta_{\scalebox{0.6}{${a}^{\texttt{c}}$}}}   %

\DeclareMathOperator{\id}{id}

\DeclareMathOperator{\Div}{\mathrm{div}}%
\DeclareMathOperator{\Id}{\mathrm{id}}%

\newcommand\restr[2]{{\left.\kern-\nulldelimiterspace #1 \right|_{#2} }}

\newcommand{\avg}[1]{\{\!\!\{ #1\}\!\!\}}
\newcommand{\jmp}[1]{[\![#1]\!]}

\newcommand{\up}{\xi}    %
\newcommand{\vq}{\eta}   %
\newcommand{\rs}{\sigma} %
\newcommand{\z}{\zeta}   %

\newcommand{\IB}{\mathbb{B}}

\newcommand{\IL}{\mathbb{L}}

\newcommand{\IP}{\mathbb{P}}
\newcommand{\IQ}{\mathbb{Q}}
\newcommand{\IR}{\mathbb{R}}

\newcommand{\IT}{\mathbb{T}}

\newcommand{\IW}{\mathbb{W}}
\newcommand{\IX}{\mathbb{X}}
\newcommand{\IXreg}{\mathbb{X}^{\texttt{reg}}}

\newcommand{\IZ}{\mathbb{Z}}

\newcommand{\TR}{\underline{\mathbb{T}}}
\newcommand{\QR}{\underline{\mathbb{Q}}}
\newcommand{\LR}{\underline{\mathbb{L}}}
\newcommand{\XR}{\underline{\mathbb{X}}}

\newcommand{\ZR}{\underline{\mathbb{Z}}}
\newcommand{\XRreg}{\underline{\mathbb{X}}^{\texttt{reg}}}
\newcommand{\ur}{\underline{u}}
\newcommand{\vr}{\underline{v}}
\newcommand{\rr}{\underline{r}}
\newcommand{\zr}{\underline{z}}
\newcommand{\Plift}[1]{\mathcal{P}_{\!#1}} %

\newcommand{\blfvis}{a_h^{\!\texttt{d}}}             %
\newcommand{\blfdiv}{b_h}                             %
\newcommand{\blfconv}[1]{a_{h,#1}^{\!\texttt{c}}}    %
\newcommand{\blfoseen}[1]{k_{h,#1}}                  %
\newcommand{\blftrefftz}[1]{c_{h,#1}}                      %
\newcommand{\blfoseenr}[1]{\underline{k}_{h,#1}}     %
\newcommand{\blftrefftzr}[1]{\underline{c}_{h,#1}}         %

\newcommand{\oposeenelem}[1]{K_{\elm,#1}}                    %
\newcommand{\oposeen}[1]{K_{#1}}                          %
\newcommand{\oposeenrelem}[1]{\underline{K}_{\elm,#1}}       %
\newcommand{\oposeenr}[1]{\underline{K}_{#1}}             %
\newcommand{\rmod}[1]{\underline{#1}}

\newcommand{\calT}{\mathcal{T}}

\newcommand*\circled[1]{\tikz[baseline=(char.base)]{
            \node[shape=circle,draw,inner sep=2pt,scale=0.8] (char) {#1};}}

\usepackage{pgfplots}
\usepgfplotslibrary{colorbrewer,groupplots}
\pgfplotsset{
  compat=1.15,
  cycle list/Set1-5,
  title style = {font = \small},
  legend style = {font = \small},
  label style = {font = \footnotesize},
  tick label style = {font = \footnotesize},
  PPk/.style={mark=square*},
  TTk/.style={mark=*},
  TTkw/.style={mark=triangle*},
  TTkemb/.style={mark=diamond*},
}
\pgfplotsset{
    discard if not/.style 2 args={
        x filter/.append code={
            \edef\tempa{\thisrow{#1}}
            \edef\tempb{#2}
            \ifx\tempa\tempb
            \else
                
            \fi
        }
    }
}
\pgfplotsset{
    discard if/.style 2 args={
        x filter/.append code={
            \edef\tempa{\thisrow{#1}}
            \edef\tempb{#2}
            \ifx\tempa\tempb
                
            \fi
        }
    }
}

\pgfplotscreateplotcyclelist{paulcolors3}{%
violet,every mark/.append style={solid,fill=violet},mark=square*,very thick,mark size=3pt\\%
teal,every mark/.append style={solid,fill=teal},mark=*,very thick,mark size=2.8pt\\%
orange,every mark/.append style={solid,fill=orange},mark=diamond*,very thick,mark size=2pt\\%
violet,densely dashed,every mark/.append style={solid,fill=violet},mark=square*,very thick,mark size=3pt\\%
teal,densely dashed,every mark/.append style={solid,fill=teal},mark=*,very thick,mark size=2.8pt\\%
orange,densely dashed,every mark/.append style={solid,fill=orange},mark=diamond*,very thick,mark size=2pt\\%
violet,dash dot,every mark/.append style={solid,fill=violet},mark=square*,very thick,mark size=3pt\\%
teal,dash dot,every mark/.append style={solid,fill=teal},mark=*,very thick,mark size=2.8pt\\%
orange,dash dot,every mark/.append style={solid,fill=orange},mark=diamond*,very thick,mark size=2pt\\%
}

\pgfplotscreateplotcyclelist{paulcolors4}{%
violet,every mark/.append style={solid,fill=violet},mark=square*,very thick,mark size=3pt\\%
teal,every mark/.append style={solid,fill=teal},mark=*,very thick,mark size=3pt\\%
orange,every mark/.append style={solid,fill=orange},mark=diamond*,very thick,mark size=3pt\\%
cyan,every mark/.append style={solid,fill=cyan},mark=star,very thick,mark size=3pt\\%
violet,densely dashed,every mark/.append style={solid,fill=violet},mark=square*,very thick,mark size=3pt\\%
teal,densely dashed,every mark/.append style={solid,fill=teal},mark=*,very thick,mark size=2.8pt\\%
orange,densely dashed,every mark/.append style={solid,fill=orange},mark=diamond*,very thick,mark size=2pt\\%
cyan,densely dashed,every mark/.append style={solid,fill=cyan},mark=star,very thick,mark size=3pt\\%
violet,dash dot,every mark/.append style={solid,fill=violet},mark=square*,very thick,mark size=3pt\\%
teal,dash dot,every mark/.append style={solid,fill=teal},mark=*,very thick,mark size=2.8pt\\%
orange,dash dot,every mark/.append style={solid,fill=orange},mark=diamond*,very thick,mark size=2pt\\%
cyan,dash dot,every mark/.append style={solid,fill=cyan},mark=star,very thick,mark size=3pt\\%
}

\pgfplotscreateplotcyclelist{paulcolors2}{%
{violet,every mark/.append style={solid,fill=violet},mark=square*,very thick,mark size=3pt}\\%
{violet,densely dashed,every mark/.append style={solid,fill=violet},mark=square*,very thick,mark size=3pt}\\%
{teal,every mark/.append style={solid,fill=teal},mark=*,very thick,mark size=3pt}\\%
{teal,densely dashed,every mark/.append style={solid,fill=teal},mark=*,very thick,mark size=2.8pt}\\%
}

\usepackage{pgfplotstable} 
\makeatletter
\pgfplotstableset{
	discard if not/.style 2 args={
	row predicate/.append code={
	\def\pgfplotstable@loc@TMPd{\pgfplotstablegetelem{##1}{#1}\of}
	\expandafter\pgfplotstable@loc@TMPd\pgfplotstablename
	\edef\tempa{\pgfplotsretval}
	\edef\tempb{#2}
	\ifx\tempa\tempb
	\else
	\pgfplotstableuserowfalse
	\fi
	}
	}
}
\makeatother

\usepackage[plain]{algorithm}
\usepackage{algpseudocodex}

\title{
Embedded Trefftz DG method for steady Navier--Stokes flow
\\[4pt]\large Part II: Nonlinear problem} 

\author{%
  Paul Stocker\thanks{Faculty of Mathematics, University of Vienna,
  Oskar-Morgenstern-Platz 1, 1090 Vienna, Austria.}
  \and
  Igor Voulis\thanks{Institute for Numerical and Applied Mathematics,
  University of G\"ottingen, Lotzestr.\ 16--18, 37083 G\"ottingen, Germany.}
  \and
  Christoph Lehrenfeld\footnotemark[2]
  \and
  Philip L.\ Lederer\thanks{Department of Mathematics, University of Hamburg,
  Bundesstra{\ss}e 55, 20146 Hamburg, Germany.}
}

\date{}

\begin{document}
\maketitle

\begin{abstract}
We develop and analyze an embedded Trefftz-DG method for the steady incompressible Navier--Stokes equations, based on the reduced Oseen discretization from Part~I.
The main difficulty is that the reduced Trefftz space depends on the convection field, so successive Picard iterates live in different discrete spaces.
We address this by constructing projections between convection-dependent Trefftz spaces and using them to control the reduced Oseen solution map.
Under suitable resolution and small-data assumptions, we prove existence of discrete solutions, uniqueness, and convergence of the Picard iteration.
We also derive an a priori error analysis by relating the method to the underlying DG discretization, thereby inheriting convergence properties from compatible DG Navier--Stokes analyses.
Numerical experiments on standard incompressible-flow benchmarks illustrate the theory.

\medskip
\noindent\textbf{Keywords}: 
nonlinear Trefftz, embedded Trefftz methods, discontinuous Galerkin methods, Navier--Stokes equations

\medskip
\noindent\textbf{MSC2020}:
65N30, %
65N12, %
65N15, %
76D05, %
76M10 %
\end{abstract}

\section{Introduction}\label{sec:intro}

This paper is the second part of a two-part study on embedded Trefftz discontinuous Galerkin methods for incompressible flow problems.
While Part~I develops the linear theory for the steady Oseen problem, the present paper addresses the steady incompressible Navier--Stokes equations.
Our main goal is to show that the embedded Trefftz framework can be extended to the nonlinear setting and to establish existence, uniqueness, and convergence results for the resulting discrete problem.
A detailed comparison with standard DG methods in terms of computational cost was given in \cite{LLS_NM_2024} for the Stokes problem; the same local elimination mechanism underlies the Navier--Stokes method considered here.

Trefftz methods, dating back to Trefftz \cite{trefftz1926}, use approximation spaces adapted to the kernel of the underlying differential operator and can thereby achieve high approximation quality with significantly fewer degrees of freedom than standard polynomial discretizations.
Nonlinear problems, variable coefficients, and nonzero right-hand sides are, however, well known to be major obstacles for classical Trefftz constructions.

The embedded Trefftz-DG approach, introduced in \cite{LS_IJMNE_2023,lozinski19}, provides a way around this obstruction for linear problems and has been applied to a range of model equations; see, for example, \cite{SV_ARXIV_2026,HLSW_M2AN_2022,M25,S23,H24,PIMPS_ARXIV_2026,GPS_JSC_2025}.
For a detailed comparison with related methods on general meshes, we refer to \cite{LSZ_PAMM_2024}.
The key idea is to replace the explicit construction of Trefftz basis functions by local projection constraints imposed on an underlying polynomial DG space.
The direct lineage of the present work is the embedded Trefftz method for Stokes developed in \cite{LLS_NM_2024} and its extension from Stokes to the variable-coefficient Oseen operator in Part~I \cite{SVLL1_ARXIV_2026}.
There, the higher-order pressure components are locally tied to the velocity, leading to a reduced velocity formulation for the Oseen problem, and stability is established through a local-global splitting argument combined with a convection-resolution smallness condition.
These linear results provide the foundation for the nonlinear analysis carried out here.

A first attempt to extend Trefftz-type ideas to nonlinear elliptic problems was made in the numerical study \cite{yang2020trefftz}.
A related Taylor-polynomial construction for linear elliptic problems with smooth coefficients, including a convergence analysis, is given in \cite{IMPS_IMAJNA_2025}.
The approach is based on explicit Taylor expansions of the nonlinear differential operator in order to construct local approximation functions.
In particular, it relies on pointwise Taylor data and repeated local series evaluations.
The resulting bases are typically poorly conditioned, and the available analysis is restricted to smooth settings in which the required pointwise information is meaningful.
This is conceptually distinct from the present embedded approach, where the Trefftz constraint is imposed by local projections inside an ambient DG space and conditioning is inherited from the underlying polynomial space due to orthogonality properties of the projection.

For DG discretizations of the incompressible Navier--Stokes equations, see the framework and analysis in \cite{DE10,DiPietroErn}.
In \cite{CKS05} an analysis of a LDG method is given, which is based on a reduced Oseen formulation obtained by eliminating the pressure locally, an approach we also use here.
Divergence-free DG methods provide another important comparison point, see for example \cite{montlaur2010discontinuous,montlaur08}, however, their reduction mechanism is of a different nature and is substantially weaker than the embedded Trefftz reduction considered here.
In particular, enforcing elementwise divergence-free constraints removes only the pressure-divergence part of the local algebraic structure, whereas the embedded Trefftz constraint incorporates the projected Oseen operator and ties the higher-order pressure components locally to the velocity.
This leads to a significantly stronger local elimination, but also to a much more delicate stability theory.

\paragraph{Contributions.}
The present paper provides, to the best of our knowledge, the first embedded Trefftz-DG method for a nonlinear problem and full analysis thereof.
The main difficulty is not only the nonlinear convection term itself, but the fact that the reduced Trefftz space depends on the convection field.
Hence different Picard iterates belong to different discrete spaces.
As a consequence, the nonlinear analysis cannot be obtained by a direct adaptation of the standard DG Navier--Stokes theory, nor by applying the linear Oseen theory from Part~I separately at each step.
A new mechanism is needed to compare reduced Trefftz spaces generated by different convection fields.

The main contribution of this paper is a nonlinear stability and convergence theory for the embedded Trefftz-DG discretization of the steady incompressible Navier--Stokes equations, built on the reduced Oseen theory developed in Part~I.
The key new ingredients are as follows:
\begin{itemize}
    \item We formulate the nonlinear embedded Trefftz-DG method through a Picard map based on the Oseen solver from Part~I.
    This rewrites the discrete Navier--Stokes problem as a fixed-point problem in convection-dependent reduced Trefftz spaces.

    \item We prove existence of discrete Navier--Stokes solutions by Brouwer's fixed-point theorem under a resolution condition ensuring that all Oseen iterates remain in a uniform regime of Trefftz stability.

    \item We prove uniqueness of the discrete solution and convergence of the Picard iteration by Banach's fixed-point theorem under an additional smallness assumption on the data.
    In this regime, the reduced Oseen solution map is contractive.

    \item We derive an a priori convergence analysis by relating the embedded Trefftz discretization to the underlying DG method.
    In particular, the error estimates inherit the approximation properties of any DG Navier--Stokes analysis whose discrete trilinear form satisfies our abstract assumptions.
    This includes, in particular, the LDG formulation and the DG formulation of \cite{DE10,DiPietroErn,CKS05}.
\end{itemize}

\paragraph{Outline.}
The discrete DG and embedded Trefftz-DG formulations for the Navier--Stokes problem are presented in \cref{sec:discretization}.
For the reader's convenience, the reduced Oseen formulation and the key linear results from Part~I that enter the nonlinear analysis are summarized in \cref{sec:oseen_recap}.
The main nonlinear analysis is carried out in \cref{sec:nsanalysis}, where we prove existence by Brouwer's theorem, uniqueness by Banach's theorem, and convergence of the Picard iteration.
The a priori error analysis is given in \cref{sec:ns_convergence}; it shows how the embedded Trefftz method inherits convergence estimates from the corresponding underlying DG discretization, provided the chosen trilinear form satisfies the abstract continuity and stability assumptions used in the analysis.
Numerical results, including Kovasznay flow, Reynolds-number studies, and the Sch\"afer--Turek benchmark, are reported in \cref{sec:numerics}.

\subsection{The steady Navier--Stokes problem}\label{sec:model}
Consider an open bounded
Lipschitz domain $\Omega \subset \mathbb{R}^d$ with $d=2, 3$.
The steady Navier--Stokes flow problem reads as follows: Find velocity $u: \Omega \to \mathbb{R}^d$ and pressure $p: \Omega \to \mathbb{R}$ such that
\begin{eqs}\label{eq:sns}
-\nu\Delta u + (u\cdot\nabla)u + \nabla p &= f, \quad \text{in } \Omega, \\
\Div u &= 0, \quad \text{in } \Omega, \\
u &= 0, \quad \text{on } \partial\Omega,
\end{eqs}
where $f: \Omega \to \mathbb{R}^d$ is the external body force and $\nu > 0$ is the dynamic
viscosity. 
For the ease of presentation we only consider homogeneous Dirichlet boundary conditions. 
The weak formulation of the problem \eqref{eq:sns} is
then given by: Find $(u,p)\in [H^1_0(\Omega)]^d \times L^2_0(\Omega)$
such that
\begin{equation}\label{eq:weaksns}
\begin{aligned}
\int_\Omega \nu\!~\nabla u\!:\!\nabla v \dx +\int_\Omega (u\cdot\nabla)u\cdot v \dx
- \int_\Omega \Div v\!~p \dx &= \int_\Omega f\!\cdot\! v \dx && \forall v\in [H^1_0(\Omega)]^d,  \\
-\int_\Omega  \Div u\!~q \dx&= 0  && \forall q\in L^2_0(\Omega),
\end{aligned}
\end{equation}
For simplicity we only consider homogeneous Dirichlet boundary conditions.

The central building block of our approach is the \emph{Oseen problem}, obtained by
linearizing~\eqref{eq:sns} around a given convection field $w \in [W^{1,\infty}(\Omega)]^d$:
Find velocity $u: \Omega \to \mathbb{R}^d$ and pressure $p: \Omega \to \mathbb{R}$ such that
\begin{eqs}\label{eq:oseen}
-\nu\Delta u + (w\cdot\nabla)u + \nabla p &= f, \quad \text{in } \Omega, \\
\Div u &= 0, \quad \text{in } \Omega, \\
u &= 0, \quad \text{on } \partial\Omega.
\end{eqs}
The Navier--Stokes problem~\eqref{eq:sns} is equivalent to the fixed-point problem
$u = S(u)$, where $S(w)$ denotes the solution operator of~\eqref{eq:oseen}.
The Picard iteration $u^{(n+1)} = S(u^{(n)})$ therefore reduces each step to solving an
Oseen problem with convection field $w = u^{(n)}$.
For $d=2$ and under a smallness assumption on the data, the Oseen problems are well-posed and the Picard iteration converges to a solution of the Navier--Stokes problem.

\section{DG and Trefftz-DG discretization}\label{sec:discretization}
For the steady Navier--Stokes discretization, the basic linear building block is the Oseen problem from Part~I.
The nonlinear Navier--Stokes method is obtained by replacing the frozen convection field \(w_h\) by the discrete velocity itself.
We first introduce the Trefftz-DG method for the Oseen problem by recalling the necessary spaces and operators from Part~I.

\subsection{Meshes and DG notation}\label{sec:notation_dg}

We consider a shape-regular simplicial mesh \(\Th\) of \(\Omega\), and denote by
\(\Fh=\Fhi\cup\Fhb\) the set of all interior and boundary facets.
On \(F\in\Fhi\) we use the standard average and jump operators
\(\avg{\cdot}\) and \(\jmp{\cdot}\), while on \(F\in\Fhb\) we set
\(\avg{\phi}=\phi\) and \(\jmp{\phi}=\phi\).
With the same convention as in Part~I, we write $h$ both for the global mesh size $h:=\max_{T\in\Th}h_T$ and for the local piecewise constant mesh-size functions on $\Th$, $\Fh$, and $\partial\Th$.
Thus $h|_T=h_T$ on elements, $h|_F=h_F:=\diam(F)$ on facets, and on element-boundary contributions $F\subset\partial T$ we may take $h|_F=h_T$.
Shape regularity gives $h_F\simeq h_T$ for $F\subset\partial T$.
We write $(\cdot,\cdot)_\Th:=\sum_{T\in\Th}(\cdot,\cdot)_T$.

As in Part~I, the DG space is
\begin{equation}\tag{$\IX$}\label{eq:IX}
\IX=[\IP^k]^d(\Th)\times \IP^{k-1}(\Th)/\IR.
\end{equation}

\subsection{Local operator and Trefftz space}
We set $ \IXreg:=[H^2(\Th)]^d\times H^1(\Th)$ and recall the local Oseen operator as
\begin{equation}\label{eq:BKw}\tag{$\oposeenelem{w}$}
\langle \oposeenelem{w}\up,\rs_h\rangle_T
:=
h\nu^{-\frac12}\bigl(-\nu\Delta u+w\cdot\nabla u+\nabla p,r_h\bigr)_T
+
\nu^{\frac12}\bigl(-\Div u,s_h\bigr)_T,
\end{equation}
with the local test space
\begin{equation}\tag{$\IQ$}
\IQ(T):=[\IP^{k-2}(T)]^d\times \IP^{k-1}(T),
\qquad
\IQ:=\prod_{T\in\Th}\IQ(T).
\end{equation}
for \(\up=(u,p)\in \IXreg(T)\) and \(\rs_h=(r_h,s_h)\in \IQ(T)\). 
Its global counterpart \(\oposeen{w}:\IXreg\to \IQ'\) is defined by summation over the mesh.
The corresponding Trefftz space is
\begin{equation}\label{eq:Trefftzconditions}\tag{$\IT_w$}
\IT_w
:=
\bigl\{
\up_h\in\IX:\ \langle \oposeen{w}\up_h,\rs_h\rangle_T=0
\ \ \forall\,\rs_h\in\IQ(T),\ \forall\,T\in\Th
\bigr\}.
\end{equation}

\subsection{Embedded Trefftz-DG discretization of the Oseen problem}\label{sec:oseen}
As in Part~I, we write \(\up_h=(u_h,p_h)\) and \(\vq_h=(v_h,q_h)\), and define the
standard SIPDG diffusion and pressure--velocity coupling forms by
\begin{subequations}
\begin{align}
\label{eq:sipdg}
\blfvis(u_h,v_h)
&:=
(\nu \nabla u_h,\nabla v_h)_\Th
-(\avg{\nu\partial_n u_h},\jmp{v_h})_\Fh
-(\avg{\nu\partial_n v_h},\jmp{u_h})_\Fh
+\frac{\lambda\nu}{h}(\jmp{u_h},\jmp{v_h})_\Fh,
\\
\label{eq:divdg}
\blfdiv(v_h,p_h)
&:=
-(\Div v_h,p_h)_\Th
+(\jmp{v_h\cdot n},\avg{p_h})_\Fh.
\end{align}
\end{subequations}
Here \(\lambda>0\) denotes the interior penalty parameter, chosen sufficiently large, and \(\blfconv{w}\) denotes a trilinear form discretizing the convective term \((w\cdot\nabla)u\), for which we make abstract assumptions in \cref{ass:ch}.
Suitable choices are discussed in \cite[Appendix~A]{SVLL1_ARXIV_2026}.
The Oseen bilinear form is then given by
\begin{equation}\tag{$\blfoseen{w}$}
\blfoseen{w}(\up_h,\vq_h)
:=
\blfvis(u_h,v_h)
+\blfdiv(u_h,q_h)
+\blfdiv(v_h,p_h)
+\blfconv{w}(u_h,v_h).
\end{equation}
Combining the global Trefftz equation with the local residual equation 
and the product test space 
\[
\IZ_w:=\IT_w\times \IQ, 
\]
leads to the
coupled Trefftz bilinear form
\begin{equation}\tag{$\blftrefftz{w}$}
\blftrefftz{w}(\up_h,\z_h)
:=
\blfoseen{w}(\up_h,\vq_h)
+\langle \oposeen{w}\up_h,\rs_h\rangle_\Th,
\qquad
\z_h=(\vq_h,\rs_h)\in\IZ_w,
\end{equation}
and the corresponding right-hand side
\begin{equation}\tag{$F$}
F(\z_h)
:=
(f,v_h)_\Th+(h\nu^{-\frac12}f,r_h)_\Th.
\end{equation}
With this notation, the Oseen Trefftz-DG problem can be written
compactly as: find \(\up_h\in\IX\) such that
\begin{equation}\label{eq:tdgoseen}
\blftrefftz{w_h}(\up_h,\z_h)=F(\z_h)
\qquad\forall\,\z_h\in\IZ_{w_h}.
\end{equation}

\subsection{Embedded Trefftz-DG discretization of the Navier--Stokes problem}\label{sec:ns_dg}

Using our notion from the linear Oseen problem, we can now introduce the embedded Trefftz-DG method for the steady Navier--Stokes problem.
The Trefftz-DG method to \eqref{eq:weaksns} is simply given by replacing the convective velocity $w$ in the Oseen problem by the velocity solution $u$ itself, which leads to the following nonlinear problem: 
Find $\up_h = (u_h, p_h) \in \IX$ such that
\begin{eqs}\label{eq:tdgns}
    \blftrefftz{u_h}(\up_h,\z_h) = F(\z_h) && \forall \z_h \in \IZ_{u_h}.
\end{eqs}

To solve the steady Navier--Stokes problem \eqref{eq:sns}, we employ Picard's method, a standard fixed-point iteration scheme.
Here the first part of the nonlinear term $(u\cdot\nabla)u$ is frozen from one iteration to the next, which leads to solving a sequence of Oseen problems.
For that we employ the Trefftz-DG method for the Oseen problem stated in \cref{sec:oseen}.
The scheme reads as follows:
\begin{algorithm}[H]
    \caption{Picard iteration for Trefftz-DG method for steady Navier--Stokes.}\label{alg:picard}
    \begin{algorithmic}
        \State Initialize $u^0 = 0$.
        \For{$\ell=0,1,2,\ldots$}
            \State Solve the Oseen problem \eqref{eq:tdgoseen} with $w = u^\ell$ to obtain $(u^{\ell+1},p^{\ell+1})$.
            \If{$\|u^{\ell+1} - u^\ell\| < \texttt{TOL}$}
                \State Stop and return $(u^{\ell+1},p^{\ell+1})$.
            \EndIf
        \EndFor
    \end{algorithmic}
\end{algorithm}
In the algorithm \cref{alg:picard}, we initialize the iteration with $u^0=0$, which is a common choice for the initial guess in Picard iterations for Navier--Stokes.
The \texttt{TOL} is a user-defined tolerance for convergence, and the norm $\|\cdot\|$ can be chosen appropriately, for example as the $L^2$-norm.
The analysis of the convergence of the above iteration scheme relies on the analysis of the Oseen problem, which is the subject of the next section.

\section{Recap: Oseen Trefftz-DG results from Part I}\label{sec:oseen_recap}

In this section we recall, in compact form, the Oseen Trefftz-DG results from Part I that are used below.
Since the Navier-Stokes analysis is carried out entirely in a fixed convection regime, we state these results directly in that uniform form.

\subsection{Reduced formulation}\label{sec:reduced_formulation}

We next recall the spaces and operators from the Oseen analysis in Part~I in the form needed below; see in particular \cite[Sections~3--4]{SVLL1_ARXIV_2026}.
We first introduce the full local operator and Trefftz space, and then the reduced velocity formulation obtained by eliminating the higher-order pressure component.

\paragraph{Projected operators and pressure lifting.}
To eliminate the higher-order pressure component, we recall the pressure lifting which reconstructs a higher-order pressure from the velocity. 
We first define the $L^2$ projection into the space $\IP^{k-1}(T)$ via gradients
\begin{equation} \tag{$\Pi_{\nabla,T}$} \label{eq:Pinabla}
\Pi_{\nabla,T}: [L^2(T)]^d \to \IP^{k-1}(T)/\mathbb{R}, \qquad v \mapsto \argmin_{q_h \in \IP^{k-1}(T)/\mathbb{R}} \Vert \nabla q_h - v \Vert_{T},
\end{equation}
with $\Pi_{\nabla}: L^2(\Th) \to \IP^{k-1}/\IP^0$ defined accordingly s.t. $(\Pi_{\nabla} \cdot)|_T = \Pi_{\nabla,T} \cdot$ for all $T\in\Th$.
We can now define the pressure lifting $ \Plift{w}:[H^2(\Th)]^d\to \IP^{k-1}/\IP^0,$ as
\begin{equation}\label{eq:pressure_lifting}\tag{$\Plift{w}$}
\Plift{w}v = \Pi_\nabla(\nu\Delta v - \Pi^{k-2}(w\cdot\nabla v)).
\end{equation}
Given a discrete velocity $v_h\in [\IP^k]^d$, the pressure lifting allows to construct $(v_h,\Plift{w}v_h)\in\IT_w$ whenever a suitable pair exists, i.e. to lift a suitable velocity into the Trefftz space.
More precisely, the map $\Plift{w}$ is defined for all sufficiently regular velocities.
However, the pair $(v_h,\Plift{w}v_h)$ belongs to $\IT_w$ only if the projected residual is a discrete gradient and the local divergence constraint is satisfied.

\paragraph{Reduced velocity formulation.}
We set
\[
\XRreg:=[H^2(\Th)]^d,
\]
We define the reduced local test space as the kernel of the projection \(\Pi_{\nabla,T}\):
\begin{equation}\label{eq:QR}\tag{$\QR$}
\QR(T):=\ker \Pi_{\nabla,T},
\qquad
\QR:=\prod_{T\in\Th}\QR(T).
\end{equation}
So \(\QR(T)\) is the \(L^2(T)\)-orthogonal complement of the discrete gradient space inside \([\IP^{k-2}(T)]^d\), giving the decomposition
$
[\IP^{k-2}(T)]^d
=
\nabla \IP^{k-1}(T)
\oplus^\perp
\QR(T).
$

The reduced local operator is
\begin{equation}\label{eq:ArKw}\tag{$\oposeenrelem{w}$}
\langle \oposeenrelem{w}u,r_h\rangle_T := h\nu^{-\frac12}\bigl(-\nu\Delta u+w\cdot\nabla u,r_h\bigr)_T
\qquad \forall\,u\in [H^2(T)]^d,\ r_h\in\QR(T),
\end{equation}
with global version \(\oposeenr{w}:[H^2(\Th)]^d\to \QR'\).

The reduced divergence-free velocity space is
\begin{equation}\label{eq:XR}\tag{$\XR$}
\XR := \bigl\{ u_h\in[\IP^k]^d:\ \Div u_h|_\Th=0,\ \blfdiv(u_h,q_h^0)=0 \ \forall\,q_h^0\in\IP^0 \bigr\},
\end{equation}
and the reduced Trefftz space is
\begin{align}\label{eq:TR}\tag{$\TR_w$}
\TR_w :=& \bigl\{ u_h\in \XR: \langle \oposeenrelem{w}u_h,r_h\rangle_T=0\ \forall\,r_h\in\QR(T),\ \forall\,T\in\Th \bigr\}
\\ =&\bigl\{ u_h\in \IT_w^u:\ \blfdiv(u_h,q_h^0)=0\ \forall\,q_h^0\in\IP^0 \bigr\}.
\notag
\end{align}
With \(\ZR:=\TR_w\times \QR\), the \textit{reduced Oseen problem} reads: 
Find \(\ur_h\in \XR\) such that
\begin{equation}\label{eq:redtdgoseen}
\blftrefftzr{w}(\ur_h,\zr_h)=\rmod{\ell}_h(\zr_h)
\qquad \forall\,\zr_h=(\vr_\IT,\rr_h)\in \ZR,
\end{equation}
where
\begin{align} \tag{$\blftrefftzr{w}$}
&\blftrefftzr{w}(\ur_h,\zr_h)
:=
\blfoseenr{w}(\ur_h,\vr_\IT)
+
\langle \oposeenr{w}\ur_h,\rr_h\rangle_\Th\\
\tag{$\blfoseenr{w}$}
&\blfoseenr{w}(u,v)
:=
\blfoseen{w}\bigl((u,\Plift{w}u),(v,\Plift{w}v)\bigr),
\end{align}
and
\begin{equation}\label{eq:rhs_functional}\tag{$\rmod{\ell}_h$}
\rmod{\ell}_h(\vr_\IT,\rr_h) := (f,\vr_\IT)_\Th - \blfdiv(\vr_\IT,\Pi_\nabla f) + (f,h\nu^{-\frac12}\rr_h)_\Th .
\end{equation}
Finally, we recall the equivalence result from Part~I:

\begin{theorem}[Equivalence of full and reduced Oseen formulations]\label{cor:uniquesol}
For every \(w_h\in\IW_{\bar\gamma}\), the full coupled Trefftz formulation and the
reduced problem \eqref{eq:redtdgoseen} are equivalent:
any solution \((u_h,p_h)\in\IX\) of the full Oseen Trefftz problem has
\(u_h\in\XR\) solving \eqref{eq:redtdgoseen}, and conversely every solution
\(\ur_h\in\XR\) of \eqref{eq:redtdgoseen} lifts to a solution
\((u_h,p_h)\in\IX\) of the full coupled formulation.
\end{theorem}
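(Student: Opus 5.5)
The argument is a direct algebraic translation between the two formulations, using the splitting $[\IP^{k-2}(T)]^d=\nabla\IP^{k-1}(T)\oplus^\perp\QR(T)$ and the pressure lifting $\Plift{w}$. The regime $w_h\in\IW_{\bar\gamma}$ is not needed for the algebra itself. It enters only to make the lifted pressure well defined modulo constants and to match the meaning of the statement as recalled from Part~I.

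\emph{Full to reduced.} Let $(u_h,p_h)\in\IX$ solve \eqref{eq:tdgoseen}.

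1. Test with $\z_h=(0,\rs_h)$, $\rs_h\in\IQ$, which is admissible since $0\in\IT_{w_h}$. This gives the local residual equations
\[
\langle \oposeen{w_h}\up_h,\rs_h\rangle_T=(h\nu^{-1/2}f,r_h)_T .
\]
2. The $s_h$-part yields $\Div u_h|_T=0$, since $\Div u_h\in\IP^{k-1}(T)$.
3. Taking $r_h=\nabla q$ with $q\in\IP^{k-1}(T)$ shows that $\nabla p_h$ is the $\nabla\IP^{k-1}$-projection of $f+\nu\Delta u_h-w_h\cdot\nabla u_h$. Hence
\[
p_h|_T=\Pi_{\nabla,T}f+\Plift{w_h}u_h+c_T ,
\]
where one uses that $\Pi_\nabla\circ\Pi^{k-2}=\Pi_\nabla$.
4. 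The $\QR$-part of the residual equation is exactly the reduced local equation, because $\nabla p_h\perp\QR(T)$ and $\Pi^{k-2}$ does not change $L^2$ products with $\QR\subset[\IP^{k-2}]^d$.
5. Testing the Oseen equation with $v_h=0$ and $q_h\in\IP^0$ gives $\blfdiv(u_h,q_h^0)=0$. Together with step 2 this shows $u_h\in\XR$.
6. For the global equation, take $\vr_\IT\in\TR_{w_h}$. Then $(\vr_\IT,\Plift{w_h}\vr_\IT)\in\IT_{w_h}$; this should be shown in the same way as in step 3, since $\Div\vr_\IT=0$ and the reduced residual vanishes.
7. Insert $\vq_h=(\vr_\IT,\Plift{w_h}\vr_\IT)$ and substitute the decomposition of $p_h$ from step 3. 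The constants $c_T$ drop out after the divergence conditions are used. The term $\blfdiv(\vr_\IT,\Pi_\nabla f)$ moves to the right-hand side and produces $\rmod{\ell}_h$.
8. The terms $\blfdiv(u_h,\Plift{w_h}\vr_\IT)$ have to match the symmetric structure of $\blfoseenr{w}$. The key identity is $\blfdiv(u_h,q)=\blfdiv(u_h,\Plift{w_h}u_h)$-type consistency, which reduces to the vanishing of $\blfdiv(u_h,\cdot)$ on the relevant pressure space. One must check that $\blfdiv(u_h,q)$ for $u_h\in\XR$ has the correct behaviour (it reduces to facet jumps). Here I would rely on the fact that the full test $\vq_h$ also ranges over $(0,q_h)$, $q_h\in\IP^{k-1}$, which forces $\blfdiv(u_h,q_h)=0$ for all $q_h$. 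So the pressure–divergence terms vanish.

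\emph{Reduced to full.} Let $\ur_h$ solve \eqref{eq:redtdgoseen}.

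1. Define
\[
p_h:=\Pi_\nabla f+\Plift{w_h}\ur_h+\textstyle\sum_T c_T\chi_T ,
\]
with piecewise constants $c_T$ still to be determined.
2. The local equations then hold by construction, reversing steps 2–4 of the first direction.
3. The global equation with the $p_h$ defined in step 1 splits along $\IT_{w_h}=\{(v,\Plift{w_h}v+q^0)\}$, up to the elementwise constant pressure modes. The velocity part reproduces \eqref{eq:redtdgoseen}.
4. The piecewise constant pressure part must be fixed through the constants $c_T$. This is the \textbf{main obstacle}: one has to show there exist $c_T$ such that, for all $\vq_h\in\IT_{w_h}$ with nonzero $\blfdiv(v_h,\cdot)$ components,
\[
\blfdiv(v_h,c)=\text{residual}(v_h).
\]
5. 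I would handle step 4 by a discrete inf-sup argument for piecewise constants against the Trefftz velocities (from Part~I). The residual is a functional that vanishes on the kernel $\TR_{w_h}$ of the constraint $v\mapsto\blfdiv(v,\cdot)|_{\IP^0}$. By the closed range theorem in finite dimensions it is represented by some $c\in\IP^0(\Th)/\IR$.
6. Consistency of the remaining divergence tests then follows from $\ur_h\in\XR$.
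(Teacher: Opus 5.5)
The paper gives no proof of this statement; it is quoted from Part~I. I therefore judge your argument on its own merits. The route is the natural one, and with one correction it is complete. You characterise $\IT_{w_h}=\{(v,\Plift{w_h}v+q^0):\ v\in\IT^u_{w_h},\ q^0\in\IP^0(\Th)\}$ via the splitting $[\IP^{k-2}(T)]^d=\nabla\IP^{k-1}(T)\oplus^\perp\QR(T)$. You identify $p_h=\Pi_\nabla f+\Plift{w_h}u_h+c$ with $c\in\IP^0(\Th)$. In the converse direction you recover $c$ by a finite-dimensional annihilator argument. All of this is correct.

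\textbf{Step~8 of your full-to-reduced direction is false.}
\begin{itemize}
\item The pair $(0,q_h)$ lies in $\IT_{w_h}$ only if $h\nu^{-\frac12}(\nabla q_h,r_h)_T=0$ for all $r_h\in[\IP^{k-2}(T)]^d$. Since $\nabla q_h|_T\in[\IP^{k-2}(T)]^d$, this forces $q_h\in\IP^0(\Th)$, which is exactly what you used correctly in step~5.
\item For $u_h\in\XR$ and non-constant $q_h$ one has $\blfdiv(u_h,q_h)=(\jmp{u_h\cdot n},\avg{q_h})_\Fh$. This is in general nonzero, because membership in $\XR$ only controls the facet moments against piecewise constants.
\item So the conclusion that ``the pressure--divergence terms vanish'' is wrong.
\end{itemize}
Fortunately the step is not needed. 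The two coupling terms $\blfdiv(u_h,\Plift{w_h}\vr_\IT)$ and $\blfdiv(\vr_\IT,\Plift{w_h}u_h)$ are, by definition, part of $\blfoseenr{w_h}(u_h,\vr_\IT)=\blfoseen{w_h}\bigl((u_h,\Plift{w_h}u_h),(\vr_\IT,\Plift{w_h}\vr_\IT)\bigr)$. Step~7 already uses $\blfdiv(\vr_\IT,c)=0$ for $\vr_\IT\in\TR_{w_h}$. It therefore already yields $\blfoseenr{w_h}(u_h,\vr_\IT)=(f,\vr_\IT)_\Th-\blfdiv(\vr_\IT,\Pi_\nabla f)$, which is the reduced global equation. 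Nothing has to be ``matched''; simply delete step~8.

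\textbf{Smaller remarks.}
\begin{itemize}
\item In the converse direction, existence of $c$ needs no inf-sup condition. The functional $G(v):=(f,v)_\Th-\blfdiv(v,\Pi_\nabla f)-\blfoseenr{w_h}(\ur_h,v)$ is defined on $\IT^u_{w_h}$, not only on $\TR_{w_h}$. It vanishes on $\TR_{w_h}=\ker B$, where $Bv:=\blfdiv(v,\cdot)|_{\IP^0(\Th)}$. Hence it lies in the range of $B^\top$, which is your finite-dimensional argument. An inf-sup bound only adds uniqueness of $c$ modulo $\IR$, and stability.
\item $\Plift{w_h}$ is only defined modulo piecewise constants. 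This is harmless for $\blfoseenr{w_h}$ on $\XR\times\XR$. For test velocities in $\IT^u_{w_h}\setminus\TR_{w_h}$, the choice of representative of $\Plift{w_h}\ur_h$ is simply absorbed into $c$.
\item The restriction $w_h\in\IW_{\bar\gamma}$ plays no role in making $\Plift{w_h}$ well defined. The equivalence of solution sets is purely algebraic; $\bar\gamma$ matters only for well-posedness.
\end{itemize}
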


\subsection{Norms}

\paragraph{Norms on the full space.} 
We denote by \(\|\cdot\|_\Th\) the broken \(L^2\) norm, and by \(\|\cdot\|_{\Fh}\) the \(L^2\) norm on the skeleton.
For \(u\in [H^1(\Th)]^d\), \(v\in [H^2(\Th)]^d\), and \(p\in H^1(\Th)\), we use the mesh-dependent norms
\begin{align*}
\|u\|_{1,h}^2
&:=
\|\nabla u\|_\Th^2+\|h^{-\frac12}\jmp{u}\|_{\Fh}^2,
\qquad
\|v\|_{1,h,\ast}^2
:=
\|v\|_{1,h}^2
+\|h^{\frac12}\partial_n v\|_{\partial\Th}^2
+\|h \Delta v\|_\Th^2,
\\
\|p\|_{0,h}^2
&:=
\|h \nabla p\|_\Th^2
+\|h^{\frac12}\jmp{\Pi^0 p}\|_{\Fhi}^2,
\end{align*}
and, for \(\up=(u,p)\),
\begin{equation*}
\dgnorm{\up}^2:=\nu\|u\|_{1,h}^2+\nu^{-1}\|p\|_{0,h}^2,
\qquad
\dgsnorm{\up}^2:=\nu\|u\|_{1,h,\ast}^2+\nu^{-1}\|p\|_{0,h}^2.
\end{equation*}
Throughout, \(\Pi^\ell_S\) denotes the \(L^2\)-orthogonal projection onto
\(\IP^\ell(S)\) for scalar functions, or onto \([\IP^\ell(S)]^d\) for vector-valued functions; in particular, \(\Pi^0\) denotes the elementwise mean.
For \(\rs_h=(r_h,s_h)\in\IQ\), we set
\begin{equation*}%
\|\rs_h\|_{\IQ}^2:=\|r_h\|_\Th^2+\|s_h\|_\Th^2,
\end{equation*}
and let \(\|\cdot\|_{\IQ'}\) denote the induced dual norm.

\paragraph{Norms on the reduced space.} 
We set
\[
\|u\|_{\XR}:=\nu^{\frac12}\|u\|_{1,h},
\qquad
\|u\|_{\XR,\ast}:=\nu^{\frac12}\|u\|_{1,h,\ast},
\]
We equip the reduced test spaces $\QR$ and \(\ZR=\TR_w\times\QR\) with the norms
\begin{equation*}
    \|\rr_h\|_{\QR} := \|r_h\|_\Th,
    \qquad 
    \|(\vr_\IT,\rr_h)\|_{\ZR}^2 := \|\vr_\IT\|_{\XR}^2+\|\rr_h\|_{\QR}^2.
\end{equation*}

\subsection{Key analytical results from the Oseen analysis}
We define the following quantity to measure the size of the convection field $w$ in the discrete setting
\begin{equation*}%
|w|_{h,d}
:=
\nu^{-1}\Big(
\max_{T\in \Th} C_4 \|h^{1-\frac{d}{4}} w\|_{L^4(T)}
+
\max_{T\in \Th} C_{W,4}\|h^{2-\frac{d}{4}} \nabla w\|_{L^4(T)}
+
\max_{F\in \Fh} C_{J,4}\|h^{\frac14}\jmp{w\cdot n}\|_{L^4(F)}
\Big),
\end{equation*}
with constants from local discrete embeddings defined in \cite{SVLL1_ARXIV_2026}.
This norm is the same as the one used in Part~I, but whenever the convection field is a polynomial, the second term is controlled by the first.
The quantity $|w|_{h,d}$ is weaker than the discrete $H^1$ norm by a factor $h^{1-\frac{d}{4}}$:
\begin{equation}\label{eq:gamma_norm_bound}
    |w|_{h,d} \leq \nu^{-1} C_\gamma h^{1-\frac{d}{4}} \| w\|_{1,h} \qquad \forall w \in [\IP^k]^d,
\end{equation}

We restrict convection fields to the permissable set of $\IW_{\bar\gamma}$ given by
\begin{equation}\label{eq:Wgammabar}
\IW:= [\IP^k(\Th)]^d\qquad \text{and} \qquad \IW_{\bar\gamma} := \{w_h\in\IW:\ |w_h|_{h,d}\le \bar\gamma\},
\end{equation}
where we assume the resolution condition
\begin{equation}\label{ass:wh}
    \bar\gamma<\stablocal{0},
\end{equation}
which is necessary for stability of the local problems, see \cref{cor:oseen_local} below.
Here, $\stablocal{0}$ is the stability constant in $ \|\oposeenelem{0}\up_h\|_{T} \ge \stablocal{0}\,\dgnorm{\up_h}.$
Contrary to Part~I, we already restrict the permissible convection to polynomials, this is sufficient for convection appearing in Picard iteration.

We shall repeatedly use the following bound: for every
\(w\in [L^4(\Th)]^d\) and \(v\in [H^1(\Th)]^d\),
\begin{equation}\label{eq:local_conv_proj}
\|h\nu^{-\frac12}\Pi^{k-2}(w\cdot\nabla v)\|_{\Th}
\le
\nu^{\frac12}|w|_{h,d}\|\nabla v\|_{\Th}
\le
\nu^{\frac12}|w|_{h,d}\|v\|_{1,h}.
\end{equation}
by Hölder's inequality and the local \(L^4\)-inverse estimate on
\([\IP^{k-2}(T)]^d\), summed over the mesh.
Since \(\QR(T)\subset [\IP^{k-2}(T)]^d\), this also implies the bound
\begin{equation}\label{eq:local_conv_red_diff}
\|(\oposeenr{w_1}-\oposeenr{w_2})v\|_{\QR'}
\le
|w_1-w_2|_{h,d}\|v\|_{\XR}
\qquad \forall\, v\in\XR,
\end{equation}

The analysis of the method only relies on the following abstract assumptions on the trilinear form which are fulfilled for the standard choices.
\begin{assumption}[\emph{(Conditions on $\blfconv{w}$)}]\label{ass:ch}
There exists $\contconv>0$ (depending only on shape-regularity and $k$) such that for all $w\in \IW$, $u\in [H^2(\Th)]^d$, and $v_h\in[\IP^k]^d$,
\begin{equation}\label{eq:ch_cont}
  |\blfconv{w}(u,v_h)|
  \le
  \contconv\,\|w\|_{1,h}\,\|u\|_{1,h,\ast}\,\|v_h\|_{1,h}.
\end{equation}
We additionally assume
\begin{equation}\label{eq:ch_nonneg}
\blfconv{w}(u_h,u_h) \ge 0\qquad\forall w\in\IW,\ \forall u_h\in[\IP^k]^d,
\end{equation}
and the mixed $L^4$-estimate
\begin{equation}\label{eq:ch_mixed_L4}
|\blfconv{w}(u_\IL,v_{\IT})|
\le C_{t,\ast}\nu|w|_{h,d}\|u_\IL\|_{1,h,\ast}\|v_{\IT}\|_{1,h},
\qquad \forall w\in\IW,\ \forall v_{\IT}\in\IT_w^u,\ \forall u_\IL\in \IL.
\end{equation}
\end{assumption}
To discretize the convective term $(w\cdot\nabla)u$ many choices are possible. 
For now, we keep the discretization of the convection term abstract, and denote it as trilinear form $\blfconv{w}$. 
Concrete choices for the trilinear forms can be found in \cite[Appendix~A]{SVLL1_ARXIV_2026}.

We now recall continuity, local- and global-stablilty from Part~I, cf.\ \cite[Sections~3--5]{SVLL1_ARXIV_2026}.
Constants are denoted with a \(\ast\) to indicate that they are uniform on \(\IW_{\bar\gamma}\), and depend only on \(k\), the DG parameters, the shape-regularity, \(\Omega\), and \(\bar\gamma\).
Throughout Part~II, we keep explicit only the stability constants $\stabtrefftzr{*}$ and $\stabtrefftz{w_h}$, as well as the constants $\stabsolr{*}$, $\banach$, and the pointwise resolution constant $\Cres{w_h}$ we introduce later on.
The dependence on the convection field and the viscosity is made explicit only through $\|w_h\|_{1,h}/\nu$.
All remaining structural constants are absorbed into a generic constant $C>0$, except in the paragraph \cref{rem:oseen_constants}.

\begin{corollary}[Uniform local stability] \label{cor:oseen_local}
There exist local spaces $\IL\subset\IX$ and $\LR\subset\XR$ that allow to decompose 
\[
    \IX=\IT_{w_h}\oplus \IL
    \quad \text{and}\quad
    \XR=\TR_{w_h}\oplus \LR,
    \qquad \forall w_h\in\IW_{\bar\gamma}.
\]
Furthermore, for every \(w_h\in\IW_{\bar\gamma}\), the local operator \(\oposeen{w_h}:\IXreg\to\IQ'\) is continuous and 
its restriction \(\oposeen{w_h}:\IL\to\IQ'\) is invertible with
\begin{equation}\label{eq:oseen_local}
\stablocal{\ast}\,\dgnorm{\up_h}
\le
\|\oposeen{w_h}\up_h\|_{\IQ'}
\qquad \forall\,\up_h\in\IL,
\qquad
\|\oposeen{w_h}\up\|_{\IQ'}
\le
\contlocal{\ast}\,\dgsnorm{\up}
\qquad \forall\,\up\in\IXreg,
\end{equation}
and similarly for the reduced local operator \(\oposeenr{w_h}:\LR\to\QR'\) we have that
\begin{equation}\label{eq:oseen_local_red}
\stablocal{\ast}\,\|\ur_\IL\|_{\XR}
\le
\|\oposeenr{w_h}\ur_\IL\|_{\QR'}
\qquad \forall\,\ur_\IL\in\LR,
\qquad
\|\oposeenr{w_h}\ur\|_{\QR'}
\le
\contlocal{\ast}\,\|\ur\|_{\XR,\ast}
\qquad \forall\,\ur\in\XRreg.
\end{equation}
\end{corollary}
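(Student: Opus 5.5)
The plan is to derive \cref{cor:oseen_local} from the pointwise-in-$w_h$ statements of Part~I by a perturbation argument around the Stokes case $w_h=0$. The resolution condition \eqref{ass:wh} gives a uniform margin $\stablocal{0}-\bar\gamma>0$, and this margin is what makes the constants independent of $w_h$.

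\textbf{Choice of complements.} I take $\IL$ to be a fixed complement, independent of $w_h$: the $\dgnorm{\cdot}$-orthogonal complement of $\IT_0$ in $\IX$ (equivalently the one chosen in Part~I for the Stokes operator). Since $\oposeen{0}$ restricted to $\IL$ is injective, and $\dim\IL=\dim\IQ$ by surjectivity of $\oposeen{0}:\IX\to\IQ'$, it is an isomorphism with $\|\oposeen{0}\up_h\|_{\IQ'}\ge\stablocal{0}\dgnorm{\up_h}$. Analogously, $\LR$ is a fixed complement of $\TR_0$ in $\XR$.

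\textbf{Perturbation bound.} The difference $\oposeen{w_h}-\oposeen{0}$ only contributes the term $h\nu^{-1/2}(w_h\cdot\nabla u,r_h)_T$. Since $r_h\in[\IP^{k-2}]^d$, this term sees only $\Pi^{k-2}(w_h\cdot\nabla u)$. By \eqref{eq:local_conv_proj},
\[
\|(\oposeen{w_h}-\oposeen{0})\up_h\|_{\IQ'}\le\nu^{\frac12}|w_h|_{h,d}\|u_h\|_{1,h}\le\bar\gamma\,\dgnorm{\up_h}.
\]
Hence, for every $\up_h\in\IL$,
\[
\|\oposeen{w_h}\up_h\|_{\IQ'}\ge(\stablocal{0}-\bar\gamma)\dgnorm{\up_h}.
\]
So $\stablocal{\ast}:=\stablocal{0}-\bar\gamma>0$. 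Injectivity together with equal dimensions gives invertibility of $\oposeen{w_h}:\IL\to\IQ'$.

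\textbf{Decomposition.} Invertibility on $\IL$ yields $\IX=\IT_{w_h}\oplus\IL$. For $\up_h\in\IX$, let $\up_\IL\in\IL$ solve $\oposeen{w_h}\up_\IL=\oposeen{w_h}\up_h$. Then $\up_h-\up_\IL\in\ker\oposeen{w_h}=\IT_{w_h}$. Moreover $\IT_{w_h}\cap\IL=\{0\}$ by injectivity.

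\textbf{Reduced case.} The reduced case is identical, using \eqref{eq:local_conv_red_diff} with $w_2=0$ in place of the bound above, and the reduced Stokes local stability from Part~I. One point needs checking here: surjectivity of $\oposeenr{0}:\XR\to\QR'$, so that the dimensions match. I expect this to follow from the Part~I Stokes analysis, which uses exactly the splitting $[\IP^{k-2}]^d=\nabla\IP^{k-1}\oplus^\perp\QR$.

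\textbf{Continuity.} The bound $\|\oposeen{w_h}\up\|_{\IQ'}\le\contlocal{\ast}\dgsnorm{\up}$ is a direct Cauchy--Schwarz estimate:
\begin{itemize}
\item $h\nu^{1/2}\|\Delta u\|$ and $\|h\nabla p\|\nu^{-1/2}$ are controlled by $\dgsnorm{\up}$;
\item $\nu^{1/2}\|\Div u\|$ is controlled by $\nu^{1/2}\|\nabla u\|$;
\item the convective term is bounded via \eqref{eq:local_conv_proj} by $\bar\gamma\nu^{1/2}\|u\|_{1,h}$.
\end{itemize}
This gives $\contlocal{\ast}=C+\bar\gamma$, uniformly on $\IW_{\bar\gamma}$. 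The same argument gives the reduced bound.

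\textbf{Main obstacle.} The main obstacle is making sure the dimension and surjectivity argument holds uniformly. In particular, I need that the Stokes stability constant $\stablocal{0}$ is indeed stated on a $w$-independent complement, so that the perturbation argument applies with a fixed $\IL$. If Part~I only provides $\stablocal{0}$ on $\IL$ for the full operator, then for the reduced operator I would transfer it by restricting to divergence-free velocities and eliminating the pressure via $\Plift{0}$.
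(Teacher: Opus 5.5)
Your argument matches the paper's. The paper inherits these bounds from Part~I by perturbing the Stokes operator $\oposeen{0}$ on a fixed, $w$-independent complement, which yields $\stablocal{\ast}=\stablocal{0}-\bar\gamma$ and a uniform continuity constant of your form $\contlocal{0}+\bar\gamma\le(2\contlocal{0}^2+2\bar\gamma^2)^{1/2}$ (cf.\ \cref{rem:oseen_constants}). The one inaccuracy is calling the $\dgnorm{\cdot}$-orthogonal complement of $\IT_0$ equivalent to Part~I's local choice: it is not local, but this is harmless, since it realizes the quotient norm and so inherits $\stablocal{0}$ from any stable complement, and your perturbation step works verbatim with Part~I's local $\IL$ and $\LR$.
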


\begin{lemma}[Continuity]\label{lem:oseen_cont}
For every \(w_h\in\IW_{\bar\gamma}\), there hold
\begin{subequations}
\begin{align}
|\blfoseen{w_h}(\up,\vq_h)|
&\le
\contoseen{w_h}\,\dgsnorm{\up}\,\dgnorm{\vq_h}
&& \forall\,\up\in\IXreg,\ \vq_h\in\IX, \label{eq:oseen_cont_full}
\\
|\blfoseenr{w_h}(\ur,\vr_h)|
&\le
\contoseenr{w_h}\,\|\ur\|_{\XR,\ast}\,\|\vr_h\|_{\XR}
&& \forall\,\ur\in\XRreg,\ \vr_h\in\XR, \label{eq:oseen_cont_red}
\\
|\blfoseenr{w_h}(\ur_\IL,\vr_\IT)|
&\le
\contcross{\ast}\,\|\ur_\IL\|_{\XR}\,\|\vr_\IT\|_{\XR}
&& \forall\,\ur_\IL\in\LR,\ \vr_\IT\in\TR_{w_h}. \label{eq:oseen_cross}
\end{align}
\end{subequations}
Here \(\contoseen{w_h}\) and \(\contoseenr{w_h}\) are the continuity constants from
Part~I, with explicit dependence on \(w_h\) as recalled below; in particular they
may depend on \(\|w_h\|_{1,h}/\nu\), whereas \(\contcross{\ast}\) is uniform on
\(\IW_{\bar\gamma}\), since the cross bound depends on \(w_h\) only through
\(|w_h|_{h,d}\).
\end{lemma}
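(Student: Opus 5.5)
The statement to prove is \cref{lem:oseen_cont}: continuity of the full and reduced Oseen forms, and the uniform cross bound. The plan is to estimate each form term by term, and then pass to the reduced form through the pressure lifting.

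\emph{Step 1: the full bound \eqref{eq:oseen_cont_full}.} We split \(\blfoseen{w_h}\) into \(\blfvis\), the two \(\blfdiv\) terms, and \(\blfconv{w_h}\).
For \(\blfvis\) we use Cauchy--Schwarz on volume and facet terms. The consistency term \((\avg{\nu\partial_n u},\jmp{v_h})_\Fh\) is absorbed into \(\nu\|u\|_{1,h,\ast}\|v_h\|_{1,h}\) via the term \(\|h^{\frac12}\partial_n u\|_{\partial\Th}\) of the \(\ast\)-norm. The symmetric term uses a discrete trace inverse inequality on \(v_h\in[\IP^k]^d\).
For \(\blfdiv(v_h,p)\) we split \(p=\Pi^0p+(p-\Pi^0p)\). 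The oscillating part is handled by integrating by parts elementwise and using Poincaré and trace estimates, which give \(\|h\nabla p\|_\Th\). The mean part reduces, after summation by parts over facets, to \(\|h^{\frac12}\jmp{\Pi^0p}\|_{\Fhi}\) against \(\|h^{-\frac12}\jmp{v_h}\|\) and \(\|\nabla v_h\|\). The weights \(\nu^{\pm\frac12}\) then match \(\dgnorm{\cdot}\).
\(\blfdiv(u,q_h)\) is treated symmetrically, using \(q_h\) discrete.
The convective term follows from \eqref{eq:ch_cont}, giving a contribution \(\contconv\|w_h\|_{1,h}\nu^{-1}\). This explains why \(\contoseen{w_h}\) depends on \(\|w_h\|_{1,h}/\nu\).

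\emph{Step 2: the reduced bound \eqref{eq:oseen_cont_red}.} By definition \(\blfoseenr{w_h}(\ur,\vr_h)=\blfoseen{w_h}((\ur,\Plift{w_h}\ur),(\vr_h,\Plift{w_h}\vr_h))\), so it suffices to bound the lifted pressures.
Since \(\Pi_\nabla\) is an \(L^2\)-stable gradient projection, \(\|h\nabla\Plift{w_h}v\|_\Th\le\|h(\nu\Delta v-\Pi^{k-2}(w_h\cdot\nabla v))\|_\Th\). Together with \eqref{eq:local_conv_proj}, this gives \(\le \nu\|h\Delta v\|_\Th+\nu|w_h|_{h,d}\|v\|_{1,h}\).
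The jump part of \(\|\cdot\|_{0,h}\) vanishes because the lifting lives in \(\IP^{k-1}/\IP^0\), so its elementwise mean is zero. Hence \(\nu^{-\frac12}\|\Plift{w_h}v\|_{0,h}\lesssim(1+\bar\gamma)\|v\|_{\XR,\ast}\), and for discrete \(\vr_h\) the \(\ast\)-norm is equivalent to \(\|\cdot\|_{\XR}\) by inverse estimates. Plugging this into Step 1 gives \eqref{eq:oseen_cont_red}.

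\emph{Step 3: the cross bound \eqref{eq:oseen_cross}.} This is the delicate part, since \(\contcross{\ast}\) must not depend on \(\|w_h\|_{1,h}\).
For \(\ur_\IL\in\LR\) and \(\vr_\IT\in\TR_{w_h}\), all diffusion and divergence contributions are bounded exactly as in Steps 1--2. They involve \(w_h\) only through the lifting, which is controlled by \(|w_h|_{h,d}\le\bar\gamma\). The \(\ast\)-norm of the discrete \(\ur_\IL\) is equivalent to \(\|\ur_\IL\|_{\XR}\).
The only \(\|w_h\|_{1,h}\)-dependent piece is \(\blfconv{w_h}(\ur_\IL,\vr_\IT)\). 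Here we replace \eqref{eq:ch_cont} by the mixed estimate \eqref{eq:ch_mixed_L4}, which applies precisely because \(\vr_\IT\) lies in the velocity Trefftz space and \(\ur_\IL\) in the local space. This yields \(C_{t,\ast}\nu\bar\gamma\|\ur_\IL\|_{1,h,\ast}\|\vr_\IT\|_{1,h}\lesssim\|\ur_\IL\|_{\XR}\|\vr_\IT\|_{\XR}\).

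The main obstacle is verifying this hypothesis matching for \eqref{eq:ch_mixed_L4}: the reduced \(\LR\) and \(\TR_{w_h}\) must embed, via the lifting, into \(\IL\) and \(\IT_{w_h}^u\) respectively. I would check this first using the decomposition of \cref{cor:oseen_local}.
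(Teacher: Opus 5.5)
The paper gives no proof of this lemma. It imports it from Part~I and only records the constant dependence in \cref{rem:oseen_constants}. Your Steps~2 and~3 are sound and reproduce that dependence:
\begin{itemize}
\item The lifting $\Plift{w_h}v$ has zero elementwise mean, so only $\|h\nabla\Plift{w_h}v\|_\Th\le\nu\|h\Delta v\|_\Th+\nu|w_h|_{h,d}\|v\|_{1,h}$ enters. This gives $\contoseenr{w_h}\lesssim(1+\bar\gamma)^2\contoseen{w_h}$.
\item Replacing \eqref{eq:ch_cont} by \eqref{eq:ch_mixed_L4} is precisely why $\contcross{\ast}$ sees $w_h$ only through $|w_h|_{h,d}$.
\item The inclusion of $\LR$ in the velocity part of $\IL$ that you flag belongs to the Part~I construction; it is not verified in this paper either.
\end{itemize}

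There is, however, a genuine error in Step~1, in the piecewise-constant pressure part. With $p_0:=\Pi^0p$, summation by parts gives exactly
\[
\blfdiv(v_h,p_0)=-\bigl(\avg{v_h}\cdot n,\jmp{p_0}\bigr)_{\Fhi}.
\]
So $\jmp{p_0}$ is paired with the \emph{average} of $v_h$, not its jump. The factor $\|h^{-\frac12}\avg{v_h}\|_{\Fhi}$ is only bounded by $h^{-1}\|v_h\|_\Th$, not by $\|v_h\|_{1,h}$.

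This loss of $h^{-1}$ is sharp. Take $p_0=\Pi^0x_1-c$; then $\|p_0\|_{0,h}=\|h^{\frac12}\jmp{p_0}\|_{\Fhi}\sim h$. Take $v_h=(\phi_h,0)$, with $\phi_h\in H^1_0(\Omega)$ a piecewise polynomial interpolant of a fixed positive bubble. Then $\blfdiv(v_h,p_0)=(\phi_h,1)_\Th+O(h)$, which is of order one. With $\up=(0,p_0)$ and $\vq_h=(v_h,0)$, the quotient in \eqref{eq:oseen_cont_full} grows like $h^{-1}$. The same happens for $\blfdiv(u,q_h)$. Hence no $h$-uniform $\contoseen{w_h}$ exists against the seminorm $\|h^{\frac12}\jmp{\Pi^0 p}\|_{\Fhi}$ as printed.

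The mean part must instead be bounded directly, without integrating by parts:
\[
|\blfdiv(v_h,p_0)|\le\|\Div v_h\|_\Th\|p_0\|_\Th+\|h^{-\frac12}\jmp{v_h}\|_{\Fh}\|h^{\frac12}\avg{p_0}\|_{\Fh}\lesssim\|v_h\|_{1,h}\|p_0\|_\Th .
\]
This requires a pressure norm that controls $\|\Pi^0 p\|_\Th$. That is the quantity a piecewise-constant LBB condition controls, such as the one behind the constant $\alpha_0$ in the Part~I full inf-sup. Step~1 has to be carried out in such a norm. Steps~2 and~3 are unaffected, because the lifted pressures have vanishing elementwise means.
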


\begin{theorem}[Uniform Oseen stability in the \texorpdfstring{$\bar\gamma$}{gammabar}-regime]
\label{thm:oseen_stability}
Assume that the penalty parameter $\lambda$ is chosen sufficiently large,
depending only on $k$, the DG parameters, the shape-regularity, and
$\bar\gamma$.
Then, for every $w_h\in\IW_{\bar\gamma}$, the reduced and full Oseen
Trefftz formulations from Part~I are well posed. More precisely,
there exist constants $\stabtrefftzr{*}>0$ and $\stabsolr{*}>0$,
depending only on $k$, the DG parameters, the shape-regularity, $\Omega$,
and $\bar\gamma$, such that
\begin{subequations}
\begin{align}
\blfoseenr{w_h}(\ur_\IT,\ur_\IT)
&\ge
C\,\|\ur_\IT\|_{\XR}^2
&& \forall\,\ur_\IT\in\TR_{w_h},
\label{eq:oseen_coerc}
\\
\sup_{\zr_h\in\ZR\setminus\{0\}}
\frac{\blftrefftzr{w_h}(\ur_h,\zr_h)}{\|\zr_h\|_{\ZR}}
&\ge
\stabtrefftzr{*}\,\|\ur_h\|_{\XR}
&& \forall\,\ur_h\in\XR,
\label{eq:oseen_red_infsup}
\\
\sup_{\z_h\in \IZ_{w_h}\setminus\{0\}}
\frac{\blftrefftz{w_h}(\up_h,\z_h)}{\|\z_h\|_{\IZ}}
&\ge
\stabtrefftz{w_h}\,\dgnorm{\up_h}
&& \forall\,\up_h\in\IX,
\label{eq:oseen_full_infsup}
\\
\|S_r(w_h)\|_{\XR}
&\le
\stabsolr{*}\,\nu^{-\frac12}\|f\|_{\Th}.
\label{eq:oseen_energy}
\end{align}
\end{subequations}
The precise origin of these constants from the Part~I analysis is recalled
in the paragraph below.
\end{theorem}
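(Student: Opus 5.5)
This theorem is a recap of Part~I specialized to the uniform regime $\IW_{\bar\gamma}$. The plan is therefore to re-run the Part~I arguments and track that every constant depends on $w_h$ only through $|w_h|_{h,d}\le\bar\gamma$. The exception is the full-space constant $\stabtrefftz{w_h}$, which is allowed to depend on $\|w_h\|_{1,h}/\nu$.

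\textbf{Step 1: coercivity \eqref{eq:oseen_coerc}.} For $\ur_\IT\in\TR_{w_h}$ I expand $\blfoseenr{w_h}(\ur_\IT,\ur_\IT)$ into three parts.
\begin{itemize}
\item The SIPDG part gives $C\nu\|\ur_\IT\|_{1,h}^2$ for $\lambda$ large.
\item The convective part is nonnegative by \eqref{eq:ch_nonneg}.
\item The pressure--velocity terms contain the lifted pressure $\Plift{w_h}\ur_\IT$. Since $\Div \ur_\IT=0$ elementwise and the mean jumps vanish, these reduce to facet terms $(\jmp{\ur_\IT\cdot n},\avg{\Plift{w_h}\ur_\IT})_\Fh$.
\end{itemize}
To control the facet terms, I bound $\nu^{-1/2}\|\Plift{w_h}\ur_\IT\|_{0,h}$ using \eqref{eq:local_conv_proj} and an inverse estimate on $\nu\Delta$. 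This gives a bound of the form $C(1+\bar\gamma)\|\ur_\IT\|_{\XR}$, weighted by $h$. Combining it with the penalty $\lambda\nu h^{-1}\|\jmp{\ur_\IT}\|^2$ via Young's inequality absorbs the term once $\lambda$ is chosen large depending on $\bar\gamma$.

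\textbf{Step 2: reduced inf-sup \eqref{eq:oseen_red_infsup}.} I use the splitting $\ur_h=\ur_\IT+\ur_\IL$ from \cref{cor:oseen_local}.
\begin{itemize}
\item Take $\rr_h$ to be the Riesz representative of $\oposeenr{w_h}\ur_h$. Since $\oposeenr{w_h}\ur_\IT=0$, \eqref{eq:oseen_local_red} yields control $\stablocal{\ast}\|\ur_\IL\|_{\XR}$.
\item Take $\vr_\IT=\ur_\IT$. Step 1 gives $C\|\ur_\IT\|^2_{\XR}$, up to the cross term $\blfoseenr{w_h}(\ur_\IL,\ur_\IT)$. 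That cross term is bounded by $\contcross{\ast}\|\ur_\IL\|\|\ur_\IT\|$ through \eqref{eq:oseen_cross}.
\item Testing with $(\ur_\IT,\delta\rr_h)$ and choosing $\delta$ large depending on $\contcross{\ast}$, $C$ and $\stablocal{\ast}$ closes the argument via Young's inequality.
\end{itemize}
Because $\stablocal{\ast}$ and $\contcross{\ast}$ are uniform on $\IW_{\bar\gamma}$, so is $\stabtrefftzr{*}$. The decomposition holds for every $w_h$ because of the resolution condition \eqref{ass:wh}.

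\textbf{Step 3: energy bound \eqref{eq:oseen_energy}.} Existence and uniqueness of $S_r(w_h)$ follow from Step 2 by finite-dimensionality. The reduced test space and the trial space have matching dimension, via the count inherited from \cref{cor:uniquesol}, and injectivity holds. The inf-sup estimate then bounds $\stabtrefftzr{*}\|S_r(w_h)\|_{\XR}$ by $\|\rmod{\ell}_h\|_{\ZR'}$. I estimate the right-hand side term by term:
\begin{itemize}
\item $(f,\vr_\IT)_\Th$ uses a discrete Poincaré inequality, giving $\nu^{-1/2}\|f\|\,\|\vr_\IT\|_{\XR}$.
\item $\blfdiv(\vr_\IT,\Pi_\nabla f)$ reduces to facet terms, because $\vr_\IT$ is divergence-free. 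These are bounded using $\|h^{1/2}\jmp{\vr_\IT}\|$ and a trace/inverse estimate, with $\|h\nabla\Pi_\nabla f\|\le h\|f\|$.
\item $(f,h\nu^{-1/2}\rr_h)_\Th$ is trivial.
\end{itemize}

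\textbf{Step 4: full inf-sup \eqref{eq:oseen_full_infsup}.} I transfer Step 2 to the full space through the equivalence \cref{cor:uniquesol}, recovering the pressure via the lifting and the $\IP^0$ inf-sup. This is where the dependence on $\|w_h\|_{1,h}/\nu$ enters, through $\contoseen{w_h}$.

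\textbf{Main obstacle.} I expect the hardest part to be Step 1, that is, showing that the lifted-pressure coupling can be absorbed uniformly in the $\bar\gamma$-regime with $\lambda$ depending only on $\bar\gamma$.
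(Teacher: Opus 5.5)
Your proposal is correct and follows essentially the paper's route: the paper inherits these estimates from Part~I and, as in \cref{rem:oseen_constants}, tracks that the coercivity (with $\lambda$ large depending on $\bar\gamma$), the reduced inf-sup constant (built from the coercivity constant, $\contcross{\ast}$ and $\stablocal{\ast}$) and hence $\stabsolr{*}$ depend on $w_h$ only through $|w_h|_{h,d}\le\bar\gamma$, while the full inf-sup constant picks up $\contoseen{w_h}$ through the piecewise-constant LBB argument. Two small corrections: the dimension match in Step~3 comes from the invertibility of $\oposeenr{w_h}|_{\LR}$ in \cref{cor:oseen_local} (so $\dim\LR=\dim\QR$), not from \cref{cor:uniquesol}; and in Step~4 the equivalence theorem, which is a statement about solutions for the specific data $F$, does not by itself give an inf-sup bound on all of $\IX$, so you need the direct Part~I argument that splits $\up_h$ into its $\IL$-part, its Trefftz velocity part and its piecewise-constant pressure.
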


\begin{remark}[On the origin of the uniform constants]\label{rem:oseen_constants}
For completeness, we briefly indicate how the constants used in \cref{cor:oseen_local,lem:oseen_cont,thm:oseen_stability} are inherited from Part~I.

The uniform local constants are obtained from the prototype Stokes constants:
\[
\stablocal{\ast}=\stablocal{0}-\bar\gamma,
\qquad
\contlocal{\ast}=\bigl(2\contlocal{0}^2+2\bar\gamma^2\bigr)^{1/2},
\]
and
\[
\contcross{\ast}:=\sup_{w_h\in\IW_{\bar\gamma}}\contcross{w_h}.
\]
Here $\contcross{w_h}$ depends on $w_h$ only through $|w_h|_{h,d}$, and is therefore uniform on $\IW_{\bar\gamma}$.
Moreover, once $\lambda$ is chosen sufficiently large, the coercivity constant on the reduced Trefftz part can be fixed uniformly, and one may take $\staboseenr{\ast}=\tfrac12$.

Accordingly, in the reduced inf-sup constant $\stabtrefftzr{*}$ obtained in Part~I  the convection only enters through $\staboseenr{\ast}$, $\contcross{\ast}$, and $\stablocal{\ast}$, and is therefore uniform on $\IW_{\bar\gamma}$.
Likewise, the full inf-sup constant $\stabtrefftz{w_h}$ from the full coupled inf-sup estimate in Part~I is based on the piecewise-constant LBB constant $\alpha_0$ and the pointwise Oseen continuity constant $\contoseen{w_h}$.

Finally, the only non-uniform dependence on the convection field enters through the pointwise continuity constants:
\[
\contoseen{w_h}\lesssim 1+\frac{\|w_h\|_{1,h}}{\nu},
\qquad
\contoseenr{w_h}\lesssim \contoseen{w_h},
\]
up to structural multiplicative constants depending only on $k$, the DG parameters, and the shape-regularity.
The constant $\contoseen{w_h}$ appears multiplicative in the full inf-sup constant $\stabtrefftz{w_h}$.
Hence, the full continuity and full inf-sup constants remain dependent on $w_h$ until the Banach regime yields a uniform bound on $\|w_h\|_{1,h}/\nu$.
\end{remark}

\section{Analysis of the Navier--Stokes Trefftz-DG method}\label{sec:nsanalysis}

In view of the reduced Oseen formulation from \cref{sec:reduced_formulation}, we formulate the nonlinear fixed-point argument directly on the reduced velocity space $\XR$.
For a given convection field $w_h\in\XR$, let
\[
S_r(w_h)\in \XR
\]
denote the unique solution of the reduced Oseen problem \eqref{eq:redtdgoseen}
with convection field $w_h$.

\subsection{Existence of a solution via Brouwer's theorem}

\begin{theorem}[Existence for the reduced Navier--Stokes problem via Brouwer]
\label{thm:NS_existence_Brouwer}
Fix $\bar\gamma\in(0,1)$ and assume that \cref{ass:wh} hold uniformly for all $w_h\in \IW_{\bar\gamma}$, with the penalty parameter chosen large enough.
For each $w_h\in \IW_{\bar\gamma}$, let $S_r(w_h)\in\XR$ denote the unique solution of the
reduced Oseen problem \eqref{eq:redtdgoseen}.

Assume that the mesh resolves the data in the sense that
\begin{equation}\label{eq:resolution_condition_brouwer}
    C_\gamma \stabsolr{*} h^{1-\frac d4} \frac{\|f\|_{\Th}}{\nu^2}
\ \le\ \bar\gamma,
\end{equation}
where $C_\gamma$ is the constant from \eqref{eq:gamma_norm_bound}.
Then there exists $u_h\in \IW_{\bar\gamma}$ such that
\[
u_h=S_r(u_h).
\]
In particular, $u_h\in\XR$ solves the reduced Navier--Stokes problem, i.e.
\eqref{eq:redtdgoseen} with convection field $w_h=u_h$.

Moreover, by \cref{cor:uniquesol}, there exists $p_h\in \IP^{k-1}(\Th)/\IR$ such that
$(u_h,p_h)\in \IX$ solves the full embedded Trefftz--DG Navier--Stokes formulation
\eqref{eq:tdgns}.
\end{theorem}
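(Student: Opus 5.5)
We will apply Brouwer's fixed-point theorem to the map $S_r$ restricted to the convex compact set $K:=\IW_{\bar\gamma}\cap\XR$, a subset of the finite-dimensional space $\XR$. The argument has three steps: $K$ is an admissible domain, $S_r$ maps $K$ into itself, and $S_r$ is continuous on $K$.

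\emph{Step 1: the domain.} The functional $w\mapsto|w|_{h,d}$ is a seminorm on $\IW$: it is a sum of maxima of weighted $L^4$ norms of $w$, $\nabla w$ and $\jmp{w\cdot n}$, all of which are linear in $w$. Hence $\IW_{\bar\gamma}$ is convex and closed. Intersecting with the linear subspace $\XR$ keeps $K$ convex and closed, and $0\in K$ shows it is nonempty. $K$ need not be bounded, since $|\cdot|_{h,d}$ may fail to be a norm. We therefore work instead with
\[
K':=\{w_h\in\XR:\ \|w_h\|_{\XR}\le \stabsolr{*}\nu^{-\frac12}\|f\|_\Th\}.
\]
By \eqref{eq:gamma_norm_bound} and \eqref{eq:resolution_condition_brouwer}, every $w_h\in K'$ satisfies
\[
|w_h|_{h,d}\le \nu^{-1}C_\gamma h^{1-\frac d4}\nu^{-\frac12}\|w_h\|_{\XR}\le C_\gamma\stabsolr{*}h^{1-\frac d4}\nu^{-2}\|f\|_\Th\le\bar\gamma .
\]
Thus $K'\subset\IW_{\bar\gamma}$, and $K'$ is a closed ball in a finite-dimensional normed space, hence convex and compact.

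\emph{Step 2: self-map.} For $w_h\in K'$, \cref{thm:oseen_stability} applies because $w_h\in\IW_{\bar\gamma}$. Then \eqref{eq:oseen_energy} gives $\|S_r(w_h)\|_{\XR}\le\stabsolr{*}\nu^{-\frac12}\|f\|_\Th$, so $S_r(K')\subset K'$.

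\emph{Step 3: continuity.} This is the main obstacle, because the test space $\ZR=\TR_{w_h}\times\QR$ changes with $w_h$. A direct perturbation argument would compare solutions in different spaces. To avoid this, we would recast the reduced problem on a $w$-independent space.

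Using the decomposition $\XR=\TR_{w_h}\oplus\LR$ from \cref{cor:oseen_local}, with $\LR$ independent of $w_h$, we define $E_{w}:=(\oposeenr{w}|_{\LR})^{-1}\oposeenr{w}$. By \eqref{eq:oseen_local_red} it is bounded uniformly on $K'$. Then $\vr\mapsto \vr-E_{w}\vr$ projects $\XR$ onto $\TR_{w}$ along $\LR$. Since $E_w$ depends continuously on $w$ (by \eqref{eq:local_conv_red_diff} and the uniform invertibility), this projection is continuous in $w$ and uniformly bounded.

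We then reparametrize the test functions as $\zr_h=(\vr-E_{w_h}\vr,\rr_h)$ with $(\vr,\rr_h)$ ranging over the fixed space $\XR\times\QR$. This defines a fixed-size linear system $A(w_h)\ur_h=b(w_h)$. Its entries are continuous in $w_h$, because $\blfconv{w}$ is linear in $w$ and bounded by \eqref{eq:ch_cont}, and $\Plift{w}$ depends affinely on $w$.

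By \eqref{eq:oseen_red_infsup} the system is uniquely solvable on $K'$. Concretely, the uniform inf-sup constant $\stabtrefftzr{*}$ gives an injective operator $\XR\to(\XR\times\QR)'$ whose left inverse is bounded uniformly. The solution map $w_h\mapsto A(w_h)^{-1}b(w_h)$ is then continuous. One can see this directly by subtracting the equations for $w_1$ and $w_2$ and applying the inf-sup bound to the difference of the solutions. This yields $\|S_r(w_1)-S_r(w_2)\|_{\XR}\le C(\|w_1-w_2\|)\to0$, where the constant may depend on $\|w_i\|_{1,h}/\nu$; that is harmless here because $K'$ is bounded.

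\emph{Conclusion.} Brouwer's theorem gives $u_h\in K'\subset\IW_{\bar\gamma}$ with $u_h=S_r(u_h)$. The lifting of $u_h$ to a pair $(u_h,p_h)$ solving \eqref{eq:tdgns} follows from \cref{cor:uniquesol} with $w_h=u_h$.
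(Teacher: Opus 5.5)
Your proof is correct and follows the paper's Brouwer skeleton: self-mapping from the energy bound \eqref{eq:oseen_energy} together with the resolution condition, and continuity via the projection onto $\TR_{w}$ along the fixed complement $\LR$. The genuine difference is the choice of invariant set. The paper applies Brouwer on $\IW_{\bar\gamma}$ itself, a $|\cdot|_{h,d}$-ball in all of $\IW=[\IP^k]^d$, and uses \eqref{eq:gamma_norm_bound} plus the resolution condition to prove $S_r(\IW_{\bar\gamma})\subset\IW_{\bar\gamma}$. You instead take the energy ball $K'\subset\XR$ and use the same two ingredients to prove $K'\subset\IW_{\bar\gamma}$; invariance of $K'$ then follows directly from \eqref{eq:oseen_energy}.

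Your choice has two advantages. The domain lies in the space where $S_r$ takes its values. Also, the fixed point automatically satisfies $\|u_h\|_{1,h}/\nu\le\stabsolr{*}\|f\|_{\Th}/\nu^2$, so it lies in the set $\IB_{\bar\gamma,M}$ that the paper only introduces later for the Banach argument. The paper's choice, by contrast, gives a domain that does not depend on the data.

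Your continuity step is also more explicit than the paper's Step~4. Reparametrizing the test functions over the fixed space $\XR\times\QR$, and noting that the inf-sup constant transfers (for $\vr\in\TR_w$ one has $E_w\vr=0$), makes precise what the paper calls a ``continuously parameter-dependent family of linear systems''. You also rightly take $\stabtrefftzr{*}>0$ directly from \cref{thm:oseen_stability} rather than re-deriving it by compactness as in the paper's Step~2.

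One remark of yours is inaccurate, though harmless. $|\cdot|_{h,d}$ is a genuine norm on $\IW$, because its first term $\max_{T}\|h^{1-\frac d4}w\|_{L^4(T)}$ already vanishes only for $w=0$. So the paper's set $\IW_{\bar\gamma}$ is compact, and your detour to $K'$ was not forced, although it remains a valid and slightly more informative choice.
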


\begin{proof}
We work on the finite-dimensional space $\IW_{\bar\gamma}$, equipped with the norm $|\cdot|_{h,d}$.

\emph{Step 1: $\IW_{\bar\gamma}$ is nonempty, convex, and compact.}
Clearly $0\in \IW_{\bar\gamma}$, hence $\IW_{\bar\gamma}$ is nonempty.
Since $|\cdot|_{h,d}$ is a norm on the finite-dimensional space $\IW$, the set $\IW_{\bar\gamma}$ is closed, bounded, and convex.
Therefore it is compact.

\emph{Step 2: positivity of $\stabtrefftzr{*}$.}
By \cref{cor:oseen_local}, for every $w_h\in \IW_{\bar\gamma}$ there holds $\stabtrefftzr{w_h}>0$.
Moreover, the map $w_h\mapsto \stabtrefftzr{w_h}$ is continuous on $\IW_{\bar\gamma}$,
since $\stabtrefftzr{w_h}$ is given explicitly in terms of the constants
$\stablocal{w_h}$, $\contlocal{w_h}$, and $\contoseenr{w_h}$, all of which depend continuously on $w_h$.
Since $\IW_{\bar\gamma}$ is compact, the infimum is strictly positive
\[
\stabtrefftzr{*}>0.
\]

\emph{Step 3: invariance $S_r(\IW_{\bar\gamma})\subset \IW_{\bar\gamma}$.}
Let $w_h\in \IW_{\bar\gamma}$.
Using \eqref{eq:gamma_norm_bound} and \eqref{eq:oseen_energy} we obtain
\[
|S_r(w_h)|_{h,d}
\le \nu^{-1} C_\gamma h^{1-\frac d4}\|S_r(w_h)\|_{1,h}
\le
C_\gamma \stabsolr{*} h^{1-\frac d4} \frac{\|f\|_{\Th}}{\nu^2}.
\]
Hence, by the resolution condition \eqref{eq:resolution_condition_brouwer},
\[
|S_r(w_h)|_{h,d}\le \bar\gamma,
\]
that is, $S_r(w_h)\in \IW_{\bar\gamma}$.

\emph{Step 4: continuity of $S_r$ on $\IW_{\bar\gamma}$.}
Since $\oposeenr{w_h}|_{\LR}:\LR\to\QR'$ is uniformly invertible for all $w_h\in\IW_{\bar\gamma}$, the projector
\[
P_{\LR}(w_h):=\oposeenr{w_h}^{-1}\oposeenr{w_h}:\XR\to\LR,
\]
is well-defined and depends continuously on $w_h$; hence also
$P_{\TR}(w_h):=\Id-P_{\LR}(w_h):\XR\to\TR_{w_h}$ depends continuously on $w_h$.
Thus the reduced problem \eqref{eq:redtdgoseen} is a continuously parameter-dependent family of linear systems on the fixed finite-dimensional space $\XR$, 
and \cref{eq:oseen_red_infsup} implies uniform invertibility on $\IW_{\bar\gamma}$.
Therefore $S_r:\IW_{\bar\gamma}\to\XR$ is continuous.

\emph{Step 5: apply Brouwer.}
By Steps 1--4, $S_r$ is a continuous self-map on the nonempty convex compact set $\IW_{\bar\gamma}$.
The finite-dimensional Brouwer fixed-point theorem \cite[\S2.3]{Zeidler1986FixedPoint} therefore yields $u_h\in\IW_{\bar\gamma}$ such that
\[
u_h=S_r(u_h).
\]
By definition of $S_r$, this means exactly that $u_h\in\XR$ solves the reduced Navier--Stokes problem.

Finally, \cref{cor:uniquesol} implies that there exists $p_h\in \IP^{k-1}(\Th)/\IR$
such that $(u_h,p_h)\in\IX$ solves the full formulation \eqref{eq:tdgns}.
\end{proof}

\subsection{Uniqueness of the solution via Banach's theorem}
We now turn to uniqueness.
In contrast to the Brouwer argument, this requires a genuine data smallness condition.
We keep this separate from the resolution condition \eqref{eq:resolution_condition_brouwer}.

A central difficulty is that the reduced Trefftz space $\TR_w$ depends on the convecting velocity $w$. 
Since we fixed the local complement space $\LR$ independent of $w$, the
space decomposition
\[
\XR=\TR_w\oplus \LR
\]
allows us to transfer functions between reduced Trefftz spaces by projecting
along $\LR$.

\begin{lemma}[Stability of the projection between reduced Trefftz spaces]
\label{lem:PiTR_stability}
Let $w_1,w_2\in \IW_{\bar\gamma}$ and define
\[
\Pi_{\TR}^{w_1\to w_2}:\TR_{w_1}\to \TR_{w_2},
\qquad
\Pi_{\TR}^{w_1\to w_2}(\vr_h)
:=
\vr_h-\oposeenr{w_2}^{-1}\oposeenr{w_2}\vr_h.
\]
Then $\Pi_{\TR}^{w_1\to w_2}(\vr_h)\in \TR_{w_2}$ for all $\vr_h\in\TR_{w_1}$ and
\begin{equation}\label{eq:PiTR_stability}
\|\Pi_{\TR}^{w_1\to w_2}\vr_h-\vr_h\|_{\XR}
\le
\frac{1}{\stablocal{w_2}}
|w_1-w_2|_{h,d} 
\|\vr_h\|_{\XR}.
\end{equation}
\end{lemma}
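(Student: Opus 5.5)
The operator $\oposeenr{w_2}^{-1}$ in the definition is to be read as the inverse of the restriction $\oposeenr{w_2}|_{\LR}:\LR\to\QR'$. By \cref{cor:oseen_local} this restriction is invertible for every $w_2\in\IW_{\bar\gamma}$. Hence $\ur_\IL:=\oposeenr{w_2}^{-1}\oposeenr{w_2}\vr_h\in\LR$ is well defined, and it is exactly the $\LR$-component of $\vr_h$ in the splitting $\XR=\TR_{w_2}\oplus\LR$. The proof then consists of an algebraic membership check followed by a single stability estimate.

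\emph{Step 1 (membership).} Since $\vr_h\in\XR$ and $\ur_\IL\in\LR\subset\XR$, the difference $\Pi_{\TR}^{w_1\to w_2}\vr_h=\vr_h-\ur_\IL$ lies in $\XR$. By linearity and the defining property of the inverse,
\[
\oposeenr{w_2}(\vr_h-\ur_\IL)=\oposeenr{w_2}\vr_h-\oposeenr{w_2}\vr_h=0 \quad\text{in } \QR'.
\]
This says that $\langle\oposeenrelem{w_2}(\vr_h-\ur_\IL),r_h\rangle_T=0$ for all $r_h\in\QR(T)$ and all $T\in\Th$. By \eqref{eq:TR}, this is precisely the condition $\vr_h-\ur_\IL\in\TR_{w_2}$.

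\emph{Step 2 (estimate).} We have $\Pi_{\TR}^{w_1\to w_2}\vr_h-\vr_h=-\ur_\IL$, and $\ur_\IL\in\LR$. The local lower bound \eqref{eq:oseen_local_red}, applied with convection $w_2$, gives
\[
\stablocal{w_2}\|\ur_\IL\|_{\XR}\le\|\oposeenr{w_2}\ur_\IL\|_{\QR'}=\|\oposeenr{w_2}\vr_h\|_{\QR'}.
\]
Here $\stablocal{w_2}$ denotes the local constant for $w_2$, which is bounded below by $\stablocal{\ast}$. The key observation is that $\vr_h\in\TR_{w_1}$, so $\oposeenr{w_1}\vr_h=0$. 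We may therefore insert this term for free:
\[
\oposeenr{w_2}\vr_h=(\oposeenr{w_2}-\oposeenr{w_1})\vr_h.
\]
The difference bound \eqref{eq:local_conv_red_diff} then yields $\|\oposeenr{w_2}\vr_h\|_{\QR'}\le|w_1-w_2|_{h,d}\|\vr_h\|_{\XR}$. Dividing by $\stablocal{w_2}$ gives \eqref{eq:PiTR_stability}.

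\emph{Main point to check.} There is no real obstacle; the only delicate point is the consistency of the difference bound. The operator $\oposeenr{w}$ depends on $w$ only through the term $h\nu^{-\frac12}(w\cdot\nabla u,r_h)_T$, which is linear in $w$. Consequently the Laplacian terms cancel in the difference. Since $r_h\in\QR(T)\subset[\IP^{k-2}(T)]^d$, the remaining term is controlled via \eqref{eq:local_conv_proj} applied to $w_1-w_2$. This requires that $|\cdot|_{h,d}$ be a seminorm, so that it can be evaluated on the difference $w_1-w_2$; that $w_1-w_2$ may leave $\IW_{\bar\gamma}$ is harmless, because \eqref{eq:local_conv_proj} holds for arbitrary $L^4$ fields.
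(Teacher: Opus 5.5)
Your proposal is correct and follows the paper's proof essentially step by step. You read $\oposeenr{w_2}^{-1}$ as the inverse of the restriction to $\LR$, check membership in $\TR_{w_2}$ via $\oposeenr{w_2}\oposeenr{w_2}^{-1}=\Id$ on $\QR'$, and then obtain the estimate from the lower bound \eqref{eq:oseen_local_red} on $\LR$, the identity $\oposeenr{w_1}\vr_h=0$, and the difference bound \eqref{eq:local_conv_red_diff}. Your side remark that $w_1-w_2\notin\IW_{\bar\gamma}$ is harmless, because \eqref{eq:local_conv_proj} holds for general $L^4$ fields, is correct; the paper leaves this point implicit.
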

\begin{proof}
First, we recall that $\oposeenr{w_2}$ maps from $\XR$ (to $\QR'$) while  
$\oposeenr{w_2}^{-1}= (\oposeenr{w_2}|_{\LR})^{-1}$ maps (from $\QR'$) to $\LR$ so that $\oposeenr{w_2}^{-1}\oposeenr{w_2}: \XR \to \LR$.
Hence, the definition of
$\Pi_{\TR}^{w_1\to w_2}$ is meaningful, and
\[ \tag{$\ast$}
\Pi_{\TR}^{w_1\to w_2}\vr_h-\vr_h
=
-\oposeenr{w_2}^{-1}\oposeenr{w_2}\vr_h
\in \LR \text{ for all } \vr_h \in \TR_{w_1} \subset \XR.
\]
Moreover, $\oposeenr{w_2}\oposeenr{w_2}^{-1} = \id_{\QR'}$ so that $\oposeenr{w_2}\oposeenr{w_2}^{-1}\oposeenr{w_2} = \oposeenr{w_2}$ and hence $\oposeenr{w_2} \Pi_{\TR}^{w_1\to w_2} \vr_h = \oposeenr{w_2} \vr_h - \oposeenr{w_2} \vr_h = 0$ for all $\vr_h \in \TR_{w_1}$. 
Hence $\Pi_{\TR}^{w_1\to w_2}\vr_h\in \XR\cap \ker(\oposeenr{w_2}) = \TR_{w_2}$.
For the stability bound, 
we use $(\ast)$ and the stability bound \eqref{eq:oseen_local_red} to get for arbitrary $\vr_h \in \TR_{w_1}$
\begin{align*}
\stablocal{w_2}
&\|\Pi_{\TR}^{w_1\to w_2}\vr_h-\vr_h\|_{\XR}
 \le
\|\oposeenr{w_2}(\Pi_{\TR}^{w_1\to w_2}\vr_h-\vr_h)\|_{\QR'} 
=
\| -\oposeenr{w_2} \vr_h \|_{\QR'}
=
\|(\oposeenr{w_1} - \oposeenr{w_2})\vr_h\|_{\QR'} \\
&\le
|w_1-w_2|_{h,d}\,\|\vr_h\|_{\XR}.
\end{align*}
where we made use of $\oposeenr{w_2}\Pi_{\TR}^{w_1\to w_2}\vr_h=0$
and $\oposeenr{w_1}\vr_h = 0$ and a local convection estimate for the convection difference as in \eqref{eq:local_conv_red_diff}.
Dividing by $\stablocal{w_2}$ concludes the proof.
\end{proof}

\begin{lemma}[Lipschitz bound for the reduced Oseen map]
\label{lem:Sr_Lip_Banach}
Assume that $\blfconv{w}(\cdot,\cdot)$ satisfies \cref{ass:ch}. 
Let $w_1,w_2\in \IW_{\bar\gamma}$ and set
\[
\ur_i:=S_r(w_i)\in\XR,
\qquad i=1,2.
\]
Then there holds 
\begin{subequations}\label{eq:Sr_Lip_all}
\begin{align}\label{eq:Sr_Lip}
\norm{\ur_1-\ur_2}_{1,h}
&\le
\frac{\norm{f}_{\Th}}{\nu^{2}}
\Bigl(
\banach\,\vdgnorm{w_1-w_2}
+
\Cres{w_1}\,\nu|w_1-w_2|_{h,d}
\Bigr) \\
  \label{eq:Lambda_B}
\text{with}\quad    \banach
&:=
C \, 
\contconv \, \stabsolr{*} \, \stabtrefftzr{*}^{-1}
\quad\text{and}\quad
\Cres{w_1}
:=
C \stabtrefftzr{*}^{-1} 
\left(
    \stablocal{\ast}^{-1} + \stabsolr{*} ( 1 + \contoseenr{w_1} \stablocal{\ast}^{-1})
\right)
\end{align}
\end{subequations}
for a generic constant $C>0$ that depends only on $k$, the DG parameters and the shape-regularity. 
The estimate is pointwise in $w_1,w_2$; in particular, the second named constant keeps the explicit dependence on \(\contoseenr{w_1}\).
\end{lemma}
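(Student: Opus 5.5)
The plan is to use the reduced inf-sup stability \eqref{eq:oseen_red_infsup} for the fixed convection $w_1$ applied to the difference $e:=\ur_1-\ur_2\in\XR$:
\[
\stabtrefftzr{*}\,\|e\|_{\XR}\le \sup_{\zr_h\in\TR_{w_1}\times\QR}\frac{\blftrefftzr{w_1}(e,\zr_h)}{\|\zr_h\|_{\ZR}}.
\]
The whole argument is then to bound this residual functional.

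\emph{Splitting the residual.} For $\zr_h=(\vr_\IT,\rr_h)$ we have $\blftrefftzr{w_1}(\ur_1,\zr_h)=\rmod{\ell}_h(\zr_h)$. The difficulty is that $\ur_2$ satisfies its equation only against test functions in $\TR_{w_2}\times\QR$, so we cannot subtract the two equations directly. To bridge this, set $\vr_2:=\Pi_{\TR}^{w_1\to w_2}\vr_\IT\in\TR_{w_2}$ from \cref{lem:PiTR_stability}. The residual then splits into three groups.

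\emph{(a) Local residual part.} Since $\langle\oposeenr{w_2}\ur_2,\rr_h\rangle=(f,h\nu^{-1/2}\rr_h)$, we get
\[
\langle\oposeenr{w_1}e,\rr_h\rangle=-\langle(\oposeenr{w_1}-\oposeenr{w_2})\ur_2,\rr_h\rangle .
\]
By \eqref{eq:local_conv_red_diff} and \eqref{eq:oseen_energy} this is bounded by $|w_1-w_2|_{h,d}\,\stabsolr{*}\nu^{-1/2}\|f\|_\Th\|\rr_h\|$.

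\emph{(b) Global part.} We write
\[
\blfoseenr{w_1}(e,\vr_\IT)=\rmod{\ell}_h(\vr_\IT)-\blfoseenr{w_1}(\ur_2,\vr_\IT),
\]
and then insert $\pm\rmod{\ell}_h(\vr_2)$ and $\pm\blfoseenr{w_2}(\ur_2,\vr_2)$, using $\rmod{\ell}_h(\vr_2)=\blfoseenr{w_2}(\ur_2,\vr_2)$. This leaves three terms:
\begin{itemize}
\item $\rmod{\ell}_h(\vr_\IT-\vr_2)$. This is bounded by $C\|f\|_\Th\nu^{-1/2}\|\vr_\IT-\vr_2\|_{\XR}$, using a discrete Poincaré inequality and $h$-weighted bounds on $\Pi_\nabla f$.
\item $-\blfoseenr{w_2}(\ur_2,\vr_\IT-\vr_2)$. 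Here the difference lies in $\LR$, so we use the structure of \eqref{eq:oseen_cont_red}, giving $\contoseenr{w_1}\|\ur_2\|\,\|\vr_\IT-\vr_2\|$ up to a swap between $w_1$ and $w_2$ that is absorbed into the term below.
\item The pure convection difference $\blfoseenr{w_1}(\ur_2,\vr_\IT)-\blfoseenr{w_2}(\ur_2,\vr_\IT)$. This consists of $\blfconv{w_1-w_2}(\ur_2,\vr_\IT)$, which \eqref{eq:ch_cont} bounds by $\contconv\|w_1-w_2\|_{1,h}\|\ur_2\|_{1,h,\ast}\|\vr_\IT\|_{1,h}$, plus the pressure-lifting differences $\blfdiv(\cdot,(\Plift{w_1}-\Plift{w_2})\cdot)$. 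The latter are controlled by $\Pi_\nabla\Pi^{k-2}((w_1-w_2)\cdot\nabla\cdot)$ and hence by \eqref{eq:local_conv_proj} with $|w_1-w_2|_{h,d}$.
\end{itemize}
Using inverse inequalities, $\|\ur_2\|_{1,h,\ast}\lesssim\|\ur_2\|_{1,h}$ on polynomials, and then \eqref{eq:oseen_energy} gives $\|\ur_2\|_{\XR}\le\stabsolr{*}\nu^{-1/2}\|f\|$. Finally, \eqref{eq:PiTR_stability} converts $\|\vr_\IT-\vr_2\|$ into $\stablocal{\ast}^{-1}|w_1-w_2|_{h,d}\|\vr_\IT\|$.

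\emph{Collecting terms.} The $\contconv$ term produces $\banach\|w_1-w_2\|_{1,h}$ after dividing by $\stabtrefftzr{*}$ and converting from $\|\cdot\|_{\XR}$ to $\|\cdot\|_{1,h}$; this accounts for the $\nu^{-2}$ prefactor. The remaining terms are all proportional to $|w_1-w_2|_{h,d}$, with factors $\stablocal{\ast}^{-1}$ (from the $f$-term), $\stabsolr{*}$ (from the local part and the lifting) and $\stabsolr{*}\contoseenr{w_1}\stablocal{\ast}^{-1}$ (from the continuity term). Together they form $\Cres{w_1}\nu|w_1-w_2|_{h,d}$.

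\emph{Main obstacle.} The hard part is the bookkeeping in (b). The test functions must be transferred between Trefftz spaces, and the pressure liftings $\Plift{w_i}$ differ, so both $\blfoseenr{w_1}$ and $\blfoseenr{w_2}$ appear with different arguments. The approach is to expand these carefully rather than appeal to abstract continuity, and to make sure that only $\contoseenr{w_1}$, not a constant depending on $w_2$, survives.
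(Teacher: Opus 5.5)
Your argument is sound and is the paper's proof with the roles of $w_1$ and $w_2$ interchanged. The paper applies \eqref{eq:oseen_red_infsup} with $w_2$, transfers the test function by $\Pi_{\TR}^{w_2\to w_1}$ and keeps $\ur_1$ in the residual. It obtains the same pieces you list: the right-hand-side difference, the local term, the $\LR$-continuity term, and the convection/pressure-lifting difference.

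The one step to drop is your ``swap''. In your labelling the $\LR$-term $\blfoseenr{w_2}(\ur_2,\vr_\IT-\vr_2)$ carries $\contoseenr{w_2}$. Replacing it by $\blfoseenr{w_1}$ leaves the remainder $\blfconv{w_2-w_1}(\ur_2,\vr_\IT-\vr_2)$. Its natural bound via \eqref{eq:ch_cont} and \eqref{eq:PiTR_stability} is quadratic in $w_1-w_2$, and becomes linear only through $|w_1-w_2|_{h,d}\le 2\bar\gamma$. That multiplies $\banach$ by a $\bar\gamma$-dependent factor of order $1+2\bar\gamma\stablocal{\ast}^{-1}$, so the stated form of $\banach$ is lost.

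Instead, keep $\contoseenr{w_2}$ and rename $w_1\leftrightarrow w_2$ at the end. This is legitimate because $\norm{\ur_1-\ur_2}_{1,h}$, $\vdgnorm{w_1-w_2}$ and $|w_1-w_2|_{h,d}$ are all symmetric. It is exactly what the paper's choice of the inf-sup at $w_2$ achieves: there the $\LR$-term is $\blfoseenr{w_1}(\ur_1,\vr_2-\vr_1)$, which carries $\contoseenr{w_1}$ directly, and the $\contconv$-term is tested with the inf-sup test function itself.
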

\begin{proof}
In the following we will bound the left hand side term by several parts that we will bound by factors of 
$$
\circled{A} := \nu^{-2} \norm{f}_{\Th} \vdgnorm{w_1 - w_2}
\quad \text{and} \quad
\circled{B} := \nu^{-2} \norm{f}_{\Th} \nu |w_1 - w_2|_{h,d},
$$
so that \eqref{eq:Sr_Lip_all} reads $\vdgnorm{\rmod{u}_1-\rmod{u}_2}
\leq \banach \cdot \circled{A} + \Cres{w_1} \cdot \circled{B}$. 
Set
$ e_h:=\ur_1-\ur_2\in\XR. $
We apply the reduced inf-sup estimate \eqref{eq:oseen_red_infsup} with convection field
$w_2$
\[
\stabtrefftzr{*} \norm{e_h}_{\XR}
\le
\stabtrefftzr{w_2} \norm{e_h}_{\XR}
\le
\sup_{\zr_2\in\ZR\setminus\{0\}}
\frac{\blftrefftzr{w_2}(e_h,\zr_2)}{\norm{\zr_2}_{\ZR}},
\]
where $\ZR=\TR_{w_2}\times\QR$.
Fix $\zr_2=(\vr_2,\rr_h)\in\ZR$, and define the transferred Trefftz test
\[
\vr_1:=\Pi_{\TR}^{w_2\to w_1}\vr_2\in \TR_{w_1},
\qquad
\zr_1:=(\vr_1,\rr_h)\in \TR_{w_1}\times\QR.
\]
Since $\ur_i$ solves the reduced Oseen problem with convection field $w_i$, we have
$\blftrefftzr{w_2}(\ur_2,\zr_2)=\rmod{\ell}_h(\vr_2,\rr_h)$,
and
$\blftrefftzr{w_1}(\ur_1,\zr_1)=\rmod{\ell}_h(\vr_1,\rr_h)$,
and therefore
\begin{equation*}%
\blftrefftzr{w_2}(e_h,\zr_2)
=
\underbrace{\rmod{\ell}_h(\vr_1,\rr_h)-\rmod{\ell}_h(\vr_2,\rr_h)}_{=:I}
+
\underbrace{\Bigl(
\blftrefftzr{w_2}(\ur_1,\zr_2)-\blftrefftzr{w_1}(\ur_1,\zr_1)
\Bigr)}_{=:II}.
\end{equation*}
\noindent
\emph{Bound I.}
By definition of $\rmod{\ell}_h$, cf. \eqref{eq:rhs_functional}, we find 
$
I
=
(f,\vr_1-\vr_2)_\Th-\blfdiv(\vr_1-\vr_2,\Pi_\nabla f)$.
Using the discrete Poincar\'e inequality, continuity of $\blfdiv$, and
\cref{lem:PiTR_stability} and $\stablocal{w_1}^{-1}\le \stablocal{*}^{-1}$ as $w_1\in \IW_{\bar\gamma}$ we obtain
\begin{align*} %
|I|
&\!\le\!
C\nu^{-\frac12}\norm{f}_{\Th} \norm{\vr_1-\vr_2}_{\XR}
\!\le\!
C\stablocal{\ast}^{-1}\nu^{-\frac32}\norm{f}_{\Th} 
\, \nu |w_1-w_2|_{h,d} \norm{\vr_2}_{\XR} 
=
C\stablocal{\ast}^{-1}\nu^{\frac12} \norm{\vr_2}_{\XR} \cdot \circled{B}.
\end{align*}
\medskip\noindent
\emph{Bound II.}
Adding and subtracting $\blfoseenr{w_1}(\ur_1,\vr_2)$ we write
\[
II
=
\underbrace{
\blfoseenr{w_2}(\ur_1,\vr_2)-\blfoseenr{w_1}(\ur_1,\vr_2)
}_{=:II_a}
+
\underbrace{
\blfoseenr{w_1}(\ur_1,\vr_2-\vr_1)
}_{=:II_b}
+
\underbrace{
\langle (\oposeenr{w_2}-\oposeenr{w_1})\ur_1,\rr_h\rangle_\Th
}_{=:II_c}.
\]
For the local term, \eqref{eq:local_conv_red_diff} together with \eqref{eq:oseen_energy} gives
\begin{equation*}%
|II_c|
\!\le
|w_1\!-\!w_2|_{h,d} \norm{\ur_1}_{\XR} \norm{\rr_h}_{\QR}
\!\le
\stabsolr{*} \nu^{-\frac12} \norm{f}_{\Th} |w_1\!-\!w_2|_{h,d} \norm{\rr_h}_{\QR}
\!=
\stabsolr{*} \nu^{\frac12} \norm{\rr_h}_{\QR} \cdot \circled{B}.
\end{equation*}
Then, $\vr_2-\vr_1\in \LR$, continuity of $\blfoseenr{w_1}$, norm equivalence on
the discrete space, \cref{lem:PiTR_stability} and \eqref{eq:oseen_energy} imply
\begin{align*}
|II_b| & = \blfoseenr{w_1}(\ur_1,\vr_2-\vr_1)
\le C \contoseenr{w_1}\norm{\ur_1}_{\XR}\norm{\vr_2-\vr_1}_{\XR}
\le C \contoseenr{w_1} \stablocal{w_1}^{-1} \norm{\ur_1}_{\XR} |w_1-w_2|_{h,d} \norm{\vr_2}_{\XR} \nonumber \\
       &\le C \stabsolr{*} \contoseenr{w_1} \stablocal{\ast}^{-1} \nu^{-\frac12} \norm{f}_{\Th} |w_1-w_2|_{h,d} \norm{\vr_2}_{\XR}
=
C \stabsolr{*} \contoseenr{w_1} \stablocal{\ast}^{-1} \nu^{\frac12} \norm{\vr_2}_{\XR} \cdot \circled{B}.
\end{align*}
It remains to estimate $II_a$.
Using the stability of $\nabla\Pi_\nabla$ and \eqref{eq:local_conv_proj}, we obtain
\begin{equation*}
\norm{\Plift{w_2}v-\Plift{w_1}v}_{0,h}
=\norm{h_{\calT} \nabla \Pi_\nabla \Pi^{k-2}\bigl((w_2-w_1)\cdot\nabla v\bigr)}_\Th
\le
C\,\nu^{\frac12}\,|w_1-w_2|_{h,d} \norm{v}_{\XR}
\qquad \forall v\in \XR.
\end{equation*}
We now split $II_a$ into its convection difference $\blfconv{w_2-w_1}(\ur_1,\vr_2)$ 
and the two pressure-lifting mismatch terms
\[
\blfdiv(\ur_1,\Plift{w_2}\vr_2-\Plift{w_1}\vr_2),
\qquad
\blfdiv(\vr_2,\Plift{w_2}\ur_1-\Plift{w_1}\ur_1).
\]
By the previous bound, \eqref{eq:oseen_energy} and continuity of $\blfdiv(\cdot,\cdot)$, both mismatch
terms satisfy
\begin{align*}%
\bigl|\blfdiv(\ur_1,\Plift{w_2}\vr_2-\Plift{w_1}\vr_2)\bigr|
& +
\bigl|\blfdiv(\vr_2,\Plift{w_2}\ur_1-\Plift{w_1}\ur_1)\bigr|
\le
C \,|w_1-w_2|_{h,d} \norm{\ur_1}_{\XR} \norm{\vr_2}_{\XR} \nonumber \\
& \le 
C \stabsolr{*} \nu^{-\frac12} \norm{f}_{\Th} |w_1-w_2|_{h,d} \norm{\vr_2}_{\XR}
= 
C \stabsolr{*} \nu^{\frac12} \norm{\vr_2}_{\XR} \cdot \circled{B}.
\end{align*}
For the convection difference
use \eqref{eq:ch_cont}: 
\begin{align*}
\bigl|\blfconv{w_2-w_1}(\ur_{1},\vr_2)\bigr|
&\le
\contconv \norm{w_2-w_1}_{1,h} \norm{\ur_{1}}_{1,h} \norm{\vr_2}_{1,h}
\notag
\\
&\le
\contconv
\stabsolr{*} \nu^{-\frac32} 
\norm{f}_{\Th} 
\norm{w_2-w_1}_{1,h} \norm{\vr_2}_{\XR}
=
\contconv
\stabsolr{*} \nu^{\frac12} 
\norm{\vr_2}_{\XR} \cdot \circled{A}.
\label{eq:Banach_term_bound_new}
\end{align*}
Hence,
$$
II_a \leq 
\contconv
\stabsolr{*} \nu^{\frac12} 
\norm{\vr_2}_{\XR} \cdot \circled{A}
+ 
C \stabsolr{*} \nu^{\frac12} \norm{\vr_2}_{\XR} \cdot \circled{B}.
$$
Collecting all terms, 
and using the inf-sup estimate 
and 
$\norm{e_h}_{\XR}=\nu^{\frac12}\norm{\ur_1-\ur_2}_{1,h}$ 
and $\norm{w}_{\XR}=\nu^{\frac12}\norm{w}_{1,h}$, dividing by $\nu^{\frac12}$ proves
\eqref{eq:Sr_Lip_all}. 
\end{proof}

\begin{theorem}[Existence, uniqueness, and Picard convergence via Banach]
\label{thm:NS_existence_Banach}
Fix $\bar\gamma\in(0,1)$ and consider the set $\IW_{\bar\gamma}$ and the reduced
Oseen solution map $S_r$ from \cref{thm:NS_existence_Brouwer}.
Assume the hypotheses of \cref{thm:NS_existence_Brouwer,lem:Sr_Lip_Banach}.
Assume moreover the data smallness condition
\begin{subequations}
  \begin{equation}\label{eq:Banach_small_data}
    \banach\,\frac{\norm{f}_{\Th}}{\nu^{2}}\le \frac14,
  \end{equation}
and assume that the mesh is fine enough so that the pointwise resolution condition
  \begin{equation}\label{eq:Banach_resolution}
    \Cres{w_h}\,C_\gamma\,h^{1-\frac d4}\,
    \frac{\norm{f}_{\Th}}{\nu^{2}}\le \frac14
  \end{equation}
holds for every $w_h\in\IW_{\bar\gamma}$ satisfying $ \frac{\norm{w_h}_{1,h}}{\nu} \le \stabsolr{*}\,\frac{\norm{f}_{\Th}}{\nu^2}. $
\end{subequations}
Here $\banach$ and the pointwise constant $\Cres{w_h}$ are the constants from \eqref{eq:Sr_Lip_all}.
Then $S_r:\IW_{\bar\gamma}\to \IW_{\bar\gamma}$ has a unique fixed point
$u_h\in \IW_{\bar\gamma}$
which 
solves the reduced Navier--Stokes problem, i.e.\
\eqref{eq:redtdgoseen} with convection field $w_h=u_h$.
Moreover, by \cref{cor:uniquesol}, there exists $p_h\in \IP^{k-1}(\Th)/\IR$
such that $(u_h,p_h)\in\IX$ solves the full embedded Trefftz--DG
Navier--Stokes formulation \eqref{eq:tdgns}.
For any initial guess $w_h^0\in \IW_{\bar\gamma}$, the Picard iterates
$
w_h^{n+1}:=S_r(w_h^n)
$
converge in $\norm{\cdot}_{1,h}$ to $u_h$.
\end{theorem}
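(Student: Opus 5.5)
The plan is to apply Banach's fixed-point theorem to $S_r$, but on the subset where the Lipschitz estimate of \cref{lem:Sr_Lip_Banach} closes. This is not all of $\IW_{\bar\gamma}$: the constant $\Cres{w_1}$ depends on $\contoseenr{w_1}$, hence on $\|w_1\|_{1,h}/\nu$. I would therefore work on
\[
M:=\Bigl\{w_h\in\IW_{\bar\gamma}:\ \tfrac{\|w_h\|_{1,h}}{\nu}\le \stabsolr{*}\tfrac{\|f\|_{\Th}}{\nu^2}\Bigr\}.
\]
$M$ is nonempty (it contains $0$) and closed in the finite-dimensional space $\IW$, so it is complete in $\|\cdot\|_{1,h}$.

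\textbf{Invariance of $M$.} By Step~3 of the proof of \cref{thm:NS_existence_Brouwer}, $S_r(\IW_{\bar\gamma})\subset\IW_{\bar\gamma}$. Moreover \eqref{eq:oseen_energy} gives
\[
\nu^{-1}\|S_r(w_h)\|_{1,h}=\nu^{-3/2}\|S_r(w_h)\|_{\XR}\le \stabsolr{*}\|f\|_\Th/\nu^2 .
\]
Hence $S_r(\IW_{\bar\gamma})\subset M$, and in particular $S_r(M)\subset M$.

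\textbf{Contraction on $M$.} Take $w_1,w_2\in M$ and apply \eqref{eq:Sr_Lip}. Bound the second term by \eqref{eq:gamma_norm_bound}, $\nu|w_1-w_2|_{h,d}\le C_\gamma h^{1-d/4}\|w_1-w_2\|_{1,h}$. Since $w_1\in M$, the pointwise resolution condition \eqref{eq:Banach_resolution} applies with $w_h=w_1$; together with \eqref{eq:Banach_small_data} this gives
\[
\|S_r(w_1)-S_r(w_2)\|_{1,h}\le\bigl(\tfrac14+\tfrac14\bigr)\|w_1-w_2\|_{1,h}.
\]
So $S_r$ is a contraction with constant $\tfrac12$ on $M$. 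Banach's theorem then yields a unique fixed point $u_h\in M$, and the iterates from any start in $M$ converge in $\|\cdot\|_{1,h}$.

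\textbf{Uniqueness and convergence on all of $\IW_{\bar\gamma}$.} Any fixed point $u=S_r(u)\in\IW_{\bar\gamma}$ lies in $S_r(\IW_{\bar\gamma})\subset M$, so it coincides with $u_h$. For an arbitrary $w_h^0\in\IW_{\bar\gamma}$, the first iterate $w_h^1=S_r(w_h^0)$ lies in $M$, and from then on the contraction argument gives convergence. Consistency with \cref{thm:NS_existence_Brouwer} is automatic. The pressure $p_h$ is recovered via \cref{cor:uniquesol} exactly as in that theorem.

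The main obstacle is the asymmetry of the Lipschitz constant, which depends on $w_1$ only. It matters that \cref{lem:Sr_Lip_Banach} places $\Cres{}$ at the first argument and that the estimate is symmetric in roles. I would apply it with $w_1$ taken as whichever argument is known to lie in $M$; inside $M$ both arguments qualify, so this is harmless. The remaining care is to confirm that the contraction is measured in $\|\cdot\|_{1,h}$, a genuine norm on $\IW$, so that completeness of $M$ holds.
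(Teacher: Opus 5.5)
Your proposal is correct and follows the paper's proof essentially step by step: the same invariant set $\IB_{\bar\gamma,M}$ (your $M$), invariance via \eqref{eq:oseen_energy}, the contraction constant $\tfrac12$ obtained from \eqref{eq:Sr_Lip}, \eqref{eq:gamma_norm_bound}, \eqref{eq:Banach_small_data} and \eqref{eq:Banach_resolution} applied at $w_1$, and the extension of uniqueness and Picard convergence to all of $\IW_{\bar\gamma}$ via $S_r(\IW_{\bar\gamma})\subset\IB_{\bar\gamma,M}$. Your scaling $\nu^{-1}\norm{S_r(w_h)}_{1,h}=\nu^{-3/2}\norm{S_r(w_h)}_{\XR}$ is the correct one; the paper's intermediate factor $\nu^{-1/2}$ is a typo and does not affect the conclusion.
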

\begin{proof}
Set
\[
M:=\stabsolr{*}\,\frac{\norm{f}_{\Th}}{\nu^2},
\qquad
\IB_{\bar\gamma,M}
:=
\left\{w_h\in\IW_{\bar\gamma}:\frac{\norm{w_h}_{1,h}}{\nu}\le M\right\}.
\]
By \cref{thm:NS_existence_Brouwer}, $S_r(\IW_{\bar\gamma})\subset \IW_{\bar\gamma}$.
Moreover, the energy estimate \eqref{eq:oseen_energy} gives, for every $w_h\in\IW_{\bar\gamma}$,
\[
\frac{\norm{S_r(w_h)}_{1,h}}{\nu}
=\nu^{-\frac12}\norm{S_r(w_h)}_{\XR}
\le
\stabsolr{*}\,\frac{\norm{f}_{\Th}}{\nu^2}
=M.
\]
Thus $S_r(\IW_{\bar\gamma})\subset\IB_{\bar\gamma,M}$ and, in particular,
$S_r(\IB_{\bar\gamma,M})\subset\IB_{\bar\gamma,M}$.
Let $w_1,w_2\in \IB_{\bar\gamma,M}$. By \eqref{eq:Sr_Lip} and $\nu|w|_{h,d}\le C_\gamma h^{1-\frac d4}\vdgnorm{w}$, we obtain
\[
\norm{S_r(w_1)-S_r(w_2)}_{1,h}
\le
\frac{\norm{f}_{\Th}}{\nu^{2}}
\left(
\banach
+
\Cres{w_1}\,C_\gamma\,h^{1-\frac d4}
\right)\norm{w_1-w_2}_{1,h}.
\]
Since $w_1\in\IB_{\bar\gamma,M}$, condition \eqref{eq:Banach_resolution} applies to $w_1$.
Together with \eqref{eq:Banach_small_data}, this shows that the whole Lipschitz prefactor is at most $1/2$. Hence $S_r$ is a contraction on $\IB_{\bar\gamma,M}$ with respect to $\norm{\cdot}_{1,h}$.
This set is closed in the finite-dimensional polynomial space $\IW$ and therefore complete.
Banach's fixed-point theorem, cf. \cite[\S1.1]{Zeidler1986FixedPoint}, therefore yields a unique $u_h\in \IB_{\bar\gamma,M}$ such that
\[
u_h=S_r(u_h).
\]
Every fixed point in $\IW_{\bar\gamma}$ belongs to $\IB_{\bar\gamma,M}$, since any fixed point satisfies $w_h=S_r(w_h)$.
Hence the fixed point is unique in $\IW_{\bar\gamma}$.
For any initial guess $w_h^0\in\IW_{\bar\gamma}$, the iterate $w_h^1=S_r(w_h^0)$ lies in $\IB_{\bar\gamma,M}$, and the remaining Picard iterates converge to $u_h$ in $\norm{\cdot}_{1,h}$.
By definition of $S_r$, the fixed point $u_h\in\XR$ solves the reduced Navier--Stokes problem.
Finally, \cref{cor:uniquesol} yields a pressure $p_h\in \IP^{k-1}(\Th)/\IR$ such that $(u_h,p_h)\in\IX$ solves the full formulation \eqref{eq:tdgns}.
\end{proof}

\newcommand{\DG}{\mathrm{DG}}
\newcommand{\T}{\mathrm{T}}
\newcommand{\loc}{\mathrm{loc}}

\subsection{Convergence analysis}\label{sec:ns_convergence}
To show convergence of the embedded Trefftz-DG solution to the exact Navier--Stokes solution, we will transfer the error control from the underlying DG problem to the Trefftz-DG problem.
In contrast to the existence and uniqueness analysis above, we do \emph{not} work on the reduced formulation here, since the reduced spaces associated with the standard DG scheme and the embedded Trefftz--DG scheme are not naturally the same.
Instead, we compare both methods on the common space $\IX$.

Accordingly, throughout this section we assume from the outset that both nonlinear fixed-point problems are well posed in their respective Banach regimes: the DG Navier-Stokes discretization admits a unique fixed point, and so does the embedded Trefftz-DG discretization.
For a fixed convecting field $w_h\in \IW_{\bar\gamma}$, both methods satisfy the same global Oseen equation; their difference is therefore entirely driven by the local equation enforcing the Trefftz constraint.
This allows us to transfer error control from the local defect of the DG solution to the difference of the nonlinear fixed points.

\begin{corollary}[Uniform lower bound for the full coupled Trefftz inf-sup constant in the Banach regime]
\label{cor:trefftz_full_infsup_uniform}
Assume the hypotheses of \cref{thm:NS_existence_Banach}, and let
$w_h\in\IW_{\bar\gamma}$ be the fixed point provided by
\cref{thm:NS_existence_Banach}. Then there exists a constants
$\stabtrefftz{*}>0$, $\contoseen{*,\bar\gamma}<\infty$,
depending only on $k$, the DG parameters, the shape-regularity, $\bar\gamma$, and the Banach regime constants, such that
\begin{equation}\label{eq:trefftz_full_infsup_uniform}
    \conttrefftz{w_h}\lesssim \conttrefftz{*}
    \qquad \text{and}\qquad
    \stabtrefftz{w_h}\gtrsim \stabtrefftz{*}>0 .
\end{equation}
\end{corollary}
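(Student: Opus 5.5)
The plan is to reduce both claims to a uniform bound on $\|w_h\|_{1,h}/\nu$ for the fixed point, and then feed this bound into the structural dependence of the constants recalled in \cref{rem:oseen_constants}.

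\emph{Step 1: a priori bound on the fixed point.} By \cref{thm:NS_existence_Banach}, the fixed point satisfies $w_h=S_r(w_h)$ with $w_h\in\IW_{\bar\gamma}$. The energy estimate \eqref{eq:oseen_energy} then gives
\[
\frac{\norm{w_h}_{1,h}}{\nu}
=\nu^{-\frac32}\norm{S_r(w_h)}_{\XR}
\le \stabsolr{*}\frac{\norm{f}_{\Th}}{\nu^2}=M .
\]
Hence $w_h\in\IB_{\bar\gamma,M}$. The small-data condition \eqref{eq:Banach_small_data} bounds $M$ in terms of the Banach regime constants alone: $M\le \stabsolr{*}/(4\banach)$, and $\banach$ itself involves only $\contconv$, $\stabsolr{*}$ and $\stabtrefftzr{*}$. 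So $\norm{w_h}_{1,h}/\nu\le M_*:=\stabsolr{*}/(4\banach)$, which is independent of $h$, $\nu$ and $f$.

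\emph{Step 2: continuity constant.} \cref{rem:oseen_constants} gives $\contoseen{w_h}\lesssim 1+\norm{w_h}_{1,h}/\nu\le 1+M_*$. The continuity constant $\conttrefftz{w_h}$ of the full coupled form $\blftrefftz{w_h}$ splits into two contributions. The first is the continuity of $\blfoseen{w_h}$. The second is the continuity of the local term $\langle\oposeen{w_h}\up,\rs_h\rangle$, which \cref{cor:oseen_local} controls by $\contlocal{\ast}$, a constant uniform on $\IW_{\bar\gamma}$. Together this yields $\conttrefftz{w_h}\lesssim \contoseen{w_h}+\contlocal{\ast}\lesssim 1+M_*+\contlocal{\ast}=:\conttrefftz{*}$. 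We define $\contoseen{*,\bar\gamma}:=C(1+M_*)$.

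\emph{Step 3: inf-sup constant.} By \cref{rem:oseen_constants}, $\stabtrefftz{w_h}$ is an explicit expression built from $\alpha_0$, $\stabtrefftzr{*}$ (or equivalently $\stablocal{\ast}$, $\contcross{\ast}$, $\staboseenr{\ast}$) and $\contoseen{w_h}$. The dependence on $\contoseen{w_h}$ is multiplicative and decreasing, typically of the form $\stabtrefftz{w_h}\gtrsim \min(\dots)/(1+\contoseen{w_h})^{m}$. The plan is to reconstruct this dependence from the Part~I full inf-sup argument. That argument proceeds in three stages: the reduced inf-sup \eqref{eq:oseen_red_infsup} for the velocity, the equivalence \cref{cor:uniquesol} to recover the lifted pressure $\Plift{w_h}u_h$, and the piecewise-constant LBB with constant $\alpha_0$ to control $\Pi^0 p_h$. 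In the last stage the cross term $\blfoseen{w_h}$ is bounded via $\contoseen{w_h}$. Checking monotonicity in $\contoseen{w_h}$ and inserting $\contoseen{w_h}\le\contoseen{*,\bar\gamma}$ then gives $\stabtrefftz{w_h}\gtrsim\stabtrefftz{*}>0$.

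The main obstacle is Step 3, because the Part~I constant is only recalled qualitatively here. If the explicit formula is not available, the fallback is a monotonicity and compactness argument. We would set $\stabtrefftz{*}:=\inf\{\stabtrefftz{w}:w\in\IW_{\bar\gamma},\ \norm{w}_{1,h}/\nu\le M_*\}$. To keep this bound independent of $h$, we would rely on the constant depending on $w$ only through $\contoseen{w}$, and hence only through the scalar $\norm{w}_{1,h}/\nu\in[0,M_*]$. Positivity then follows because the infimum is taken over a compact parameter interval on which the constant is positive and continuous.
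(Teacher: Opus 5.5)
Your proposal is correct and follows the paper's proof. The energy estimate \eqref{eq:oseen_energy} at the fixed point bounds $\norm{w_h}_{1,h}/\nu$ by $\stabsolr{*}\norm{f}_{\Th}/\nu^2$, the small-data condition \eqref{eq:Banach_small_data} turns this into a bound in terms of Banach-regime constants only, and the resulting bound $\contoseen{w_h}\lesssim 1+M_*$ is inserted into the Part~I full inf-sup formula. Your extra treatment of the local term in $\conttrefftz{w_h}$ via $\contlocal{\ast}$, and your $\nu^{-3/2}$ scaling in Step~1, are slightly more careful than the paper's write-up, which writes $\nu^{-1/2}$ at that point.
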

\begin{proof}
Since $w_h=S_r(w_h)$, the reduced Oseen stability estimate
\eqref{eq:oseen_energy} gives
\[
\frac{\norm{w_h}_{1,h}}{\nu}
=
\nu^{-1/2}\|w_h\|_{\XR}
\le
\stabsolr{*}\,\frac{\norm{f}_{\Th}}{\nu^2}.
\]
By the Banach smallness assumption \eqref{eq:Banach_small_data}, the ratio
$\norm{f}_{\Th}/\nu^2$ is uniformly bounded in terms of the Banach regime.
By \cref{lem:oseen_cont} and the discussion in \cref{rem:oseen_constants}, it follows that the Oseen continuity constant is uniformly bounded,
\[
\contoseen{w_h}
\lesssim 
1+\frac{\|w_h\|_{1,h}}{\nu}
\lesssim 
\contoseen{*},
\]
Inserting the bound on $\contoseen{w_h}$ into the Part~I full coupled inf-sup formula yields
$\stabtrefftz{w_h}\ge \stabtrefftz{*}>0,$
with the stated dependencies.
\end{proof}

\begin{theorem}[Transfer from DG fixed points to Trefftz fixed points]
\label{thm:transfer_fixed_points}
Assume that the DG Navier--Stokes problem admits a solution
$ \up_h^{\mathrm{DG}}=(u_h^{\mathrm{DG}},p_h^{\mathrm{DG}})\in \IX$
with $u_h^{\mathrm{DG}}\in \IW_{\bar\gamma}$, i.e.
\begin{equation}\label{eq:ns_dg}
\blfoseen{u_h^{\mathrm{DG}}}(\up_h^{\mathrm{DG}},\vq_h)=(f,v_h)_\Th
\qquad\forall \vq_h=(v_h,q_h)\in \IX.
\end{equation}
Assume moreover that the embedded Trefftz--DG Navier--Stokes problem is well posed on
$\IW_{\bar\gamma}$, and let $ u_h^{\IT}\in \IW_{\bar\gamma} $
be the unique fixed point of the reduced Trefftz Oseen solution map
$ S_r:\IW_{\bar\gamma}\to \IW_{\bar\gamma}, $
with contraction constant $L_{\mathrm{T}}\in(0,1)$ in the norm
$\norm{\cdot}_{1,h}$.

For each $w_h\in \IW_{\bar\gamma}$, let
\[
\up_h^{\IT}(w_h)=(u_h^{\IT}(w_h),p_h^{\IT}(w_h))\in \IX,
\]
denote the full embedded Trefftz Oseen solution corresponding to the reduced
velocity solution $u_h^{\IT}(w_h)=S_r(w_h)$, cf.\ \cref{cor:uniquesol}.
Define the local defect of the DG solution by
$ \mathcal R_{h,u_h^{\mathrm{DG}}}^{\mathrm{loc}}(\up_h^{\mathrm{DG}})\in \IQ' $
through
\[
\bigl\langle
\mathcal R_{h,u_h^{\mathrm{DG}}}^{\mathrm{loc}}(\up_h^{\mathrm{DG}}),
\rs_h
\bigr\rangle_\Th
:=
\langle \oposeen{u_h^{\mathrm{DG}}}\up_h^{\mathrm{DG}},\rs_h\rangle_\Th
-
(h\nu^{-\frac12}f,r_h)_\Th
\qquad
\forall \rs_h=(r_h,s_h)\in \IQ.
\]
Then
\begin{equation}\label{eq:transfer_fixed_points}
\norm{u_h^{\IT}-u_h^{\mathrm{DG}}}_{1,h}
\le
\frac{1}{(1-L_{\mathrm{T}})\nu^{\frac12}\stabtrefftz{u_h^{\mathrm{DG}}}}
\bigl\|
\mathcal R_{h,u_h^{\mathrm{DG}}}^{\mathrm{loc}}(\up_h^{\mathrm{DG}})
\bigr\|_{\IQ'}.
\end{equation}
\end{theorem}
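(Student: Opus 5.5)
The proof combines a one-step consistency estimate with the contraction of $S_r$. Since $u_h^{\IT}=S_r(u_h^{\IT})$ and $L_{\mathrm T}$ is the contraction constant of $S_r$ on $\IW_{\bar\gamma}$, the triangle inequality gives
\[
\norm{u_h^{\IT}-u_h^{\mathrm{DG}}}_{1,h}
\le \norm{S_r(u_h^{\IT})-S_r(u_h^{\mathrm{DG}})}_{1,h}+\norm{S_r(u_h^{\mathrm{DG}})-u_h^{\mathrm{DG}}}_{1,h}
\le L_{\mathrm T}\norm{u_h^{\IT}-u_h^{\mathrm{DG}}}_{1,h}+\norm{u_h^{\IT}(u_h^{\mathrm{DG}})-u_h^{\mathrm{DG}}}_{1,h}.
\]
Here we use $u_h^{\mathrm{DG}}\in\IW_{\bar\gamma}$, so that $S_r(u_h^{\mathrm{DG}})$ is defined and the contraction applies. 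Absorbing the first term produces the factor $(1-L_{\mathrm T})^{-1}$. It remains to bound the one-step defect with the convection frozen at $w_h:=u_h^{\mathrm{DG}}$.

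For this linear step we work on the full space $\IX$. By \cref{cor:uniquesol}, the Oseen problem with convection $w_h$ has a full Trefftz solution $\up_h^{\IT}(w_h)\in\IX$, which satisfies $\blftrefftz{w_h}(\up_h^{\IT}(w_h),\z_h)=F(\z_h)$ for all $\z_h\in\IZ_{w_h}$. Set $\up_h^{\mathrm{DG}}$ and the error $\epsilon_h:=\up_h^{\IT}(w_h)-\up_h^{\mathrm{DG}}$, and apply the full inf-sup estimate \eqref{eq:oseen_full_infsup} with convection $w_h$. For any $\z_h=(\vq_h,\rs_h)\in\IZ_{w_h}=\IT_{w_h}\times\IQ$, the DG equation \eqref{eq:ns_dg} holds for all $\vq_h\in\IX\supset\IT_{w_h}$. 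This gives Galerkin orthogonality of the global part, $\blfoseen{w_h}(\epsilon_h,\vq_h)=(f,v_h)_\Th-(f,v_h)_\Th=0$. The local part is
\[
\langle\oposeen{w_h}\epsilon_h,\rs_h\rangle_\Th=(h\nu^{-\frac12}f,r_h)_\Th-\langle\oposeen{w_h}\up_h^{\mathrm{DG}},\rs_h\rangle_\Th=-\langle\mathcal R^{\mathrm{loc}}_{h,w_h}(\up_h^{\mathrm{DG}}),\rs_h\rangle_\Th .
\]
Hence $\blftrefftz{w_h}(\epsilon_h,\z_h)=-\langle\mathcal R^{\mathrm{loc}},\rs_h\rangle\le\norm{\mathcal R^{\mathrm{loc}}}_{\IQ'}\norm{\z_h}_{\IZ}$. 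The inf-sup estimate then yields $\stabtrefftz{w_h}\dgnorm{\epsilon_h}\le\norm{\mathcal R^{\mathrm{loc}}}_{\IQ'}$. Finally, $\nu^{\frac12}\norm{u_h^{\IT}(w_h)-u_h^{\mathrm{DG}}}_{1,h}\le\dgnorm{\epsilon_h}$ by the definition of $\dgnorm{\cdot}$. Combining this with the first step gives \eqref{eq:transfer_fixed_points}.

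The step needing most care is the identification of the full Oseen solution with $S_r(w_h)$. The velocity component of the lifted solution in \cref{cor:uniquesol} must coincide with the reduced solution, and the test norm $\norm{\z_h}_{\IZ}$ must dominate $\norm{\rs_h}_{\IQ}$ so that the dual-norm bound is exactly $\norm{\mathcal R^{\mathrm{loc}}}_{\IQ'}$. Both points follow from the definitions. The other point to check is that the inf-sup constant used is the pointwise one at $w_h=u_h^{\mathrm{DG}}$, exactly as written in the statement; its uniform lower bound is a separate matter, cf.\ \cref{cor:trefftz_full_infsup_uniform}.
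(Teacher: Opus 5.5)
Your proposal is correct and follows the paper's proof: the same triangle-inequality/contraction absorption giving $(1-L_{\mathrm T})^{-1}$, and the same one-step estimate, obtained from Galerkin orthogonality of the global part plus the full inf-sup bound at the frozen convector $w_h=u_h^{\mathrm{DG}}$. The only cosmetic difference is that the paper first proves the one-step bound for a general $w_h$ via an auxiliary DG Oseen solution $\up_h^{\mathrm{DG}}(w_h)$ and then specializes, whereas you specialize from the start. That is legitimate, since \eqref{eq:ns_dg} is exactly the DG Oseen equation with convector $u_h^{\mathrm{DG}}$.
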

\begin{proof}
We first fix a convection field $w_h\in \IW_{\bar\gamma}$ and compare the two
corresponding Oseen solutions
$\up_h^{\IT}(w_h)\in \IX$ and $\up_h^{\mathrm{DG}}(w_h)\in \IX$, where
$\up_h^{\mathrm{DG}}(w_h)\in \IX$ denotes the ambient DG Oseen solution with
frozen convector $w_h$, i.e.
\[
\blfoseen{w_h}(\up_h^{\mathrm{DG}}(w_h),\vq_h)=(f,v_h)_\Th
\qquad\forall \vq_h\in \IX.
\]

Since $\up_h^{\mathrm{DG}}(w_h)$ satisfies the global Oseen equation on all of
$\IX$, and in particular on the Trefftz test space $\IT_{w_h}\subset \IX$,
subtracting the DG formulation from the Trefftz formulation \eqref{eq:tdgoseen}
gives
\[
\blftrefftz{w_h}\bigl(
\up_h^{\IT}(w_h)-\up_h^{\mathrm{DG}}(w_h),
(\vq_h,\rs_h)
\bigr)
=
-
\bigl\langle
\mathcal R_{h,w_h}^{\mathrm{loc}}\bigl(\up_h^{\mathrm{DG}}(w_h)\bigr),
\rs_h
\bigr\rangle_\Th
\]
for all $(\vq_h,\rs_h)\in \IT_{w_h}\times \IQ$.
Applying the full inf-sup estimate for $\blftrefftz{w_h}$ yields
\begin{equation}\label{eq:transfer_full}
\norm{
\up_h^{\IT}(w_h)-\up_h^{\mathrm{DG}}(w_h)
}_{\IX}
\le
\frac{1}{\stabtrefftz{w_h}}
\bigl\|
\mathcal R_{h,w_h}^{\mathrm{loc}}\bigl(\up_h^{\mathrm{DG}}(w_h)\bigr)
\bigr\|_{\IQ'},
\end{equation}
and hence
\begin{equation}\label{eq:transfer_velocity}
\norm{
u_h^{\IT}(w_h)-u_h^{\mathrm{DG}}(w_h)
}_{1,h}
\le
\frac{1}{\nu^{\frac12}\stabtrefftz{w_h}}
\bigl\|
\mathcal R_{h,w_h}^{\mathrm{loc}}\bigl(\up_h^{\mathrm{DG}}(w_h)\bigr)
\bigr\|_{\IQ'}.
\end{equation}

We now pass to the nonlinear solutions. Since $u_h^{\IT}$ is a fixed point
of $S_r$, we have
\begin{align*}
\norm{u_h^{\IT}-u_h^{\mathrm{DG}}}_{1,h}
&\le
\norm{S_r(u_h^{\IT})-S_r(u_h^{\mathrm{DG}})}_{1,h}
+
\norm{S_r(u_h^{\mathrm{DG}})-u_h^{\mathrm{DG}}}_{1,h}
\\
&\le
L_{\mathrm{T}}\norm{u_h^{\IT}-u_h^{\mathrm{DG}}}_{1,h}
+
\norm{u_h^{\IT}(u_h^{\mathrm{DG}})-u_h^{\mathrm{DG}}}_{1,h}.
\end{align*}
Applying \eqref{eq:transfer_velocity} with $w_h=u_h^{\mathrm{DG}}$ gives
\[
\norm{u_h^{\IT}(u_h^{\mathrm{DG}})-u_h^{\mathrm{DG}}}_{1,h}
\le
\frac{1}{\nu^{\frac12}\stabtrefftz{u_h^{\mathrm{DG}}}}
\bigl\|
\mathcal R_{h,u_h^{\mathrm{DG}}}^{\mathrm{loc}}(\up_h^{\mathrm{DG}})
\bigr\|_{\IQ'}.
\]
Therefore
\[
(1-L_{\mathrm{T}})
\norm{u_h^{\IT}-u_h^{\mathrm{DG}}}_{1,h}
\le
\frac{1}{\nu^{\frac12}\stabtrefftz{u_h^{\mathrm{DG}}}}
\bigl\|
\mathcal R_{h,u_h^{\mathrm{DG}}}^{\mathrm{loc}}(\up_h^{\mathrm{DG}})
\bigr\|_{\IQ'},
\]
which proves \eqref{eq:transfer_fixed_points}.
\end{proof}

\begin{lemma}[Bound on the local defect of the DG solution]
\label{lem:local_defect_DG}
Let $\up=(u,p)\in [H^2(\Omega)]^d\times H^1(\Omega)$ be the exact solution of
\eqref{eq:sns}, let $ \up_h^{\mathrm{DG}}=(u_h^{\mathrm{DG}},p_h^{\mathrm{DG}})\in \IX $
be a DG Navier--Stokes solution, and set
$ w_h:=u_h^{\mathrm{DG}}. $
Then
\begin{equation}\label{eq:local_defect_bound}
\bigl\|
\mathcal R_{h,w_h}^{\mathrm{loc}}(\up_h^{\mathrm{DG}})
\bigr\|_{\IQ'}
\lesssim
(1+|w_h|_{h,d})\norm{\up-\up_h^{\mathrm{DG}}}_{\IX,\ast}
+
\mathfrak E_h^{\mathrm{loc}}(\up;w_h)
+
h\nu^{-\frac12}\norm{\nabla u}_{L^4(\Omega)}
\norm{u-u_h^{\mathrm{DG}}}_{1,h},
\end{equation}
where
\begin{align}
\mathfrak E_h^{\mathrm{loc}}(\up;w_h)
:=
\inf_{\eta_h=(v_h,q_h)\in \IX}
\Big(
&\norm{\up-\eta_h}_{\IX,\ast}
+\nu^{\frac12}\norm{h\Delta(u-v_h)}_{\Th}
+\nu^{-\frac12}\norm{h\nabla(p-q_h)}_{\Th}
\notag\\
&+\nu^{\frac12}\norm{\Div(u-v_h)}_{\Th}
+\nu^{\frac12}|w_h|_{h,d}\norm{\nabla(u-v_h)}_{\Th}
\Big).
\label{eq:local_best_approx}
\end{align}
\end{lemma}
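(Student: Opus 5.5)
The plan is to use that the exact solution satisfies the local equations pointwise. Since $-\nu\Delta u+(u\cdot\nabla)u+\nabla p=f$ and $\Div u=0$ hold in $L^2(T)$ for every $T\in\Th$, we have for all $\rs_h=(r_h,s_h)\in\IQ$
\[
(h\nu^{-\frac12}f,r_h)_\Th=\langle \oposeen{u}\up,\rs_h\rangle_\Th ,
\]
where $\oposeen{u}$ is understood elementwise via \eqref{eq:BKw} with convection field $u$. Inserting this into the definition of the local defect and adding and subtracting $\oposeen{w_h}\up$ gives
\[
\bigl\langle \mathcal R^{\loc}_{h,w_h}(\up_h^{\DG}),\rs_h\bigr\rangle_\Th
=\langle \oposeen{w_h}(\up_h^{\DG}-\up),\rs_h\rangle_\Th
+\bigl(h\nu^{-\frac12}(w_h-u)\cdot\nabla u,r_h\bigr)_\Th .
\]
The two terms are then estimated separately.

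\emph{Consistency term.} The second term is bounded by Hölder's inequality on each element, using $\|(w_h-u)\cdot\nabla u\|_{T}\le\|w_h-u\|_{L^4(T)}\|\nabla u\|_{L^4(T)}$. The factor $\|u-u_h^{\DG}\|_{L^4}$ is then controlled by the broken Sobolev embedding $H^1(\Th)\hookrightarrow L^4$, valid for $d\le 3$, which bounds it by $C\|u-u_h^{\DG}\|_{1,h}$. This produces exactly $h\nu^{-\frac12}\|\nabla u\|_{L^4(\Omega)}\|u-u_h^{\DG}\|_{1,h}\|r_h\|_\Th$. If a uniform broken embedding is not at hand, I would split $u-u_h^{\DG}=(u-I_hu)+(I_hu-u_h^{\DG})$ with a polynomial interpolant $I_hu$. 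The discrete part is handled by the discrete Sobolev embedding for DG functions. The continuous part is handled by standard $H^1\subset L^4$ together with approximation properties, and its contribution can be absorbed into $\mathfrak E_h^{\loc}$.

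\emph{Discrete operator term.} Continuity of $\oposeen{w_h}$ on $\IXreg$ directly gives $\|\oposeen{w_h}(\up-\up_h^{\DG})\|_{\IQ'}\le\contlocal{\ast}\dgsnorm{\up-\up_h^{\DG}}$ by \eqref{eq:oseen_local}. However, this uses $w_h\in\IW_{\bar\gamma}$, and the statement wants an explicit factor $(1+|w_h|_{h,d})$. So I would redo the estimate by hand. The diffusion part is bounded by $h\nu^{\frac12}\|\Delta(u-u_h)\|$, the pressure part by $h\nu^{-\frac12}\|\nabla(p-p_h)\|$, and the divergence part by $\nu^{\frac12}\|\Div(u-u_h)\|$. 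The convection part $h\nu^{-\frac12}(w_h\cdot\nabla(u-u_h),r_h)$ is bounded via \eqref{eq:local_conv_proj} (test functions lie in $\IP^{k-2}$, so the projection is harmless) by $\nu^{\frac12}|w_h|_{h,d}\|\nabla(u-u_h)\|$. The terms $h\Delta(u-u_h)$, $\Div$ and $\nabla(u-u_h)$ are all contained in $\dgsnorm{\cdot}$, since $\Div$ is controlled by $\nabla$ and $h\nabla p$ appears in $\|p\|_{0,h}$. This yields the first summand in \eqref{eq:local_defect_bound}.

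\emph{Role of $\mathfrak E_h^{\loc}$.} The term $\mathfrak E_h^{\loc}$ arises if one prefers to split $\up-\up_h^{\DG}=(\up-\eta_h)+(\eta_h-\up_h^{\DG})$ with arbitrary $\eta_h\in\IX$. The continuous part is estimated term by term as above, which gives exactly the contributions listed in \eqref{eq:local_best_approx}. The discrete part is estimated using inverse estimates, which convert $h\Delta$ and $h\nabla$ into $\|\cdot\|_{1,h}$ and $\|\cdot\|_{0,h}$. It is then bounded through the triangle inequality by $\dgsnorm{\up-\up_h^{\DG}}+\dgsnorm{\up-\eta_h}$. Taking the infimum over $\eta_h$ concludes the argument.

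\emph{Main obstacle.} The main difficulty I expect is the Hölder/embedding step for the nonlinear consistency term. It needs an $L^4$ bound of the nonpolynomial error $u-u_h^{\DG}$ in terms of $\|\cdot\|_{1,h}$ uniformly in $h$, and this is where the broken Sobolev inequality on shape-regular meshes is invoked.
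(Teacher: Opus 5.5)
Your proof is correct and uses the same ingredients as the paper. The exact solution satisfies the local equation with convection $u$. The operator term is bounded by a hand-made continuity estimate with an explicit factor $(1+|w_h|_{h,d})$, using \eqref{eq:local_conv_proj} for the convection part. The mismatch $(\oposeen{w_h}-\oposeen{u})\up$ is handled by H\"older's inequality and a broken $L^4$ embedding.

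The difference is the decomposition. The paper inserts an arbitrary $\eta_h\in\IX$ and splits the defect into $\oposeen{w_h}(\up_h^{\DG}-\eta_h)+\oposeen{w_h}(\eta_h-\up)+(\oposeen{w_h}-\oposeen{u})\up$. You instead bound $\oposeen{w_h}(\up_h^{\DG}-\up)$ directly, which is legitimate since both arguments lie in $\IXreg$.

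\begin{itemize}
\item Your route gives a sharper bound without $\mathfrak E_h^{\loc}$, which implies the stated one because $\mathfrak E_h^{\loc}\ge 0$.
\item It also avoids a hidden dependence. In the paper's first term, $\norm{\up-\eta_h}_{\IX,\ast}$ actually carries a factor $(1+|w_h|_{h,d})$, which is absorbed into $\lesssim$ and is uniform only for $w_h\in\IW_{\bar\gamma}$.
\item The $\eta_h$-split would only pay off with inverse estimates on the discrete difference $\up_h^{\DG}-\eta_h$. These would let one replace $\norm{\up-\up_h^{\DG}}_{\IX,\ast}$ by the weaker $\dgnorm{\up-\up_h^{\DG}}$, which is what DG analyses usually control. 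With the $\ast$-norm as stated, neither version uses this.
\end{itemize}

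For the nonlinear term, you pair against $r_h$ in $L^2$, using $\norm{(w_h-u)\cdot\nabla u}_{L^2(\Omega)}\le\norm{w_h-u}_{L^4(\Omega)}\norm{\nabla u}_{L^4(\Omega)}$. This is cleaner than the paper's $L^{4/3}$--$L^4$ duality phrasing, which, taken literally, would need an $h$-dependent inverse estimate on $r_h$.

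The broken embedding $\norm{v}_{L^4(\Omega)}\lesssim\norm{v}_{1,h}$ holds for piecewise $H^1$ functions on shape-regular meshes, not only for polynomials, so your interpolant fallback is unnecessary. As formulated, that fallback would not literally be absorbed into $\mathfrak E_h^{\loc}$ anyway. The extra term carries the factor $h\nu^{-\frac12}\norm{\nabla u}_{L^4(\Omega)}$ and involves a fixed interpolant rather than an infimum.
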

\begin{proof}
Let $\eta_h=(v_h,q_h)\in \IX$ be arbitrary. Since the exact solution $\up=(u,p)$
of \eqref{eq:sns} satisfies
\[
-\nu\Delta u+(u\cdot\nabla)u+\nabla p=f,
\qquad
\Div u=0,
\]
we have
\[
\langle \oposeen{u}\up,\rs_h\rangle_\Th
=
(h\nu^{-\frac12}f,r_h)_\Th
\qquad \forall \rs_h=(r_h,s_h)\in \IQ.
\]
Hence
\begin{align*}
\mathcal R_{h,w_h}^{\mathrm{loc}}(\up_h^{\mathrm{DG}})
&=
\oposeen{w_h}\up_h^{\mathrm{DG}}-(h\nu^{-\frac12}f,0)
\\
&=
\oposeen{w_h}(\up_h^{\mathrm{DG}}-\eta_h)
+
\oposeen{w_h}(\eta_h-\up)
+
(\oposeen{w_h}-\oposeen{u})\up .
\end{align*}

For the first term, by continuity of the local operator we obtain
\begin{align*}
\bigl\|\oposeen{w_h}(\up_h^{\mathrm{DG}}-\eta_h)\bigr\|_{\IQ'}
&\le
\contlocal{w_h}\norm{\up_h^{\mathrm{DG}}-\eta_h}_{\IX,\ast}
\\
&\le
\contlocal{w_h}
\bigl(
\norm{\up-\up_h^{\mathrm{DG}}}_{\IX,\ast}
+
\norm{\up-\eta_h}_{\IX,\ast}
\bigr).
\end{align*}
Since $\contlocal{w_h}\lesssim 1+|w_h|_{h,d}$, this gives
\begin{equation}\label{eq:defect_term_1}
\bigl\|\oposeen{w_h}(\up_h^{\mathrm{DG}}-\eta_h)\bigr\|_{\IQ'}
\lesssim
(1+|w_h|_{h,d})
\bigl(
\norm{\up-\up_h^{\mathrm{DG}}}_{\IX,\ast}
+
\norm{\up-\eta_h}_{\IX,\ast}
\bigr).
\end{equation}

For the second term, by definition of \(\oposeen{w_h}\), stability of the \(L^2\)-projection, and \eqref{eq:local_conv_proj},
\begin{align}
\bigl\|\oposeen{w_h}(\eta_h-\up)\bigr\|_{\IQ'}
\lesssim\;
&\nu^{\frac12}\norm{h\Delta(v_h-u)}_{\Th}
+\nu^{-\frac12}\norm{h\nabla(q_h-p)}_{\Th}
\notag\\
&+\nu^{\frac12}\norm{\Div(v_h-u)}_{\Th}
+\nu^{\frac12}|w_h|_{h,d}\norm{\nabla(v_h-u)}_{\Th}.
\label{eq:defect_term_2}
\end{align}

For the third term, only the convection part differs, and therefore
\[
(\oposeen{w_h}-\oposeen{u})\up
=
( h\nu^{-\frac12}\Pi^{k-2}\bigl((w_h-u)\cdot\nabla u\bigr), 0 ).
\]
Thus, using the dual $L^4$ estimate and H\"older's inequality,
\begin{align*}
\bigl\|(\oposeen{w_h}-\oposeen{u})\up\bigr\|_{\IQ'}
&\lesssim
\nu^{-\frac12}
\norm{h (w_h-u)\cdot\nabla u}_{L^{\frac43}(\Omega)}
\le
h\nu^{-\frac12}
\norm{\nabla u}_{L^4(\Omega)}
\norm{w_h-u}_{L^4(\Omega)}.
\end{align*}
Applying the broken $L^4$ embedding to $w_h-u=u_h^{\mathrm{DG}}-u$ gives
\begin{equation}\label{eq:defect_term_3}
\bigl\|(\oposeen{w_h}-\oposeen{u})\up\bigr\|_{\IQ'}
\lesssim
h\nu^{-\frac12}\norm{\nabla u}_{L^4(\Omega)}
\norm{u-u_h^{\mathrm{DG}}}_{1,h}.
\end{equation}

Combining \eqref{eq:defect_term_1}, \eqref{eq:defect_term_2}, and
\eqref{eq:defect_term_3}, and then taking the infimum over
$\eta_h=(v_h,q_h)\in \IX$, yields \eqref{eq:local_defect_bound}.
If $w_h\in \IW_{\bar\gamma}$, then $|w_h|_{h,d}\le \bar\gamma$, so the hidden
constant is uniform in $w_h$.
\end{proof}

\begin{corollary}[Trefftz--DG inherits the DG convergence behaviour]
\label{cor:TDG_inherits_DG}
Assume the Banach regime of \cref{thm:NS_existence_Banach}, and let
$\up=(u,p)$ be the exact Navier--Stokes solution.
Let $ \up_h^{\mathrm{DG}}=(u_h^{\mathrm{DG}},p_h^{\mathrm{DG}})\in \IX $
be a DG fixed point satisfying
\[
\blfoseen{u_h^\DG}(\up_h^\DG,\vq_h)=(f,v_h)_\Th
\qquad\forall \vq_h=(v_h,q_h)\in\IX,
\]
and assume, for some $M_\DG<\infty$ independent of $h$, that
\[
u_h^\DG\in\IW_{\bar\gamma},
\qquad
\frac{\|u_h^\DG\|_{1,h}}{\nu}\le M_\DG .
\]
Let $u_h^{\IT}\in \IW_{\bar\gamma}$ be the unique Trefftz fixed point.

Then there exists a constant $C>0$, depending only on
$k$, the DG parameters, the shape-regularity, $\bar\gamma$,
$\stabtrefftz{*}$, $M_\DG$, and the Banach constants, but independent of $h$,
such that
\begin{align}
\norm{u-u_h^{\IT}}_{1,h}
\le\;&
\norm{u-u_h^{\mathrm{DG}}}_{1,h}
\notag\\
&\quad
+
C\Bigl(
\norm{\up-\up_h^{\mathrm{DG}}}_{\IX,\ast}
+
\mathfrak E_h^{\mathrm{loc}}(\up;u_h^{\mathrm{DG}})
+
h\nu^{-\frac12}\norm{\nabla u}_{L^4(\Omega)}
\norm{u-u_h^{\mathrm{DG}}}_{1,h}
\Bigr).
\label{eq:TDG_inherits_DG}
\end{align}
In particular, whenever the local defect is of the same order as the ambient DG
error, the Trefftz--DG velocity converges with the same rate as the DG velocity.
\end{corollary}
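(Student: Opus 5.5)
The plan is to chain three earlier results: the triangle inequality, the fixed-point transfer estimate of \cref{thm:transfer_fixed_points}, and the local defect bound of \cref{lem:local_defect_DG}. The only extra work is to make every constant uniform in $h$.

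We begin with
\[
\norm{u-u_h^{\IT}}_{1,h}\le \norm{u-u_h^{\DG}}_{1,h}+\norm{u_h^{\DG}-u_h^{\IT}}_{1,h}.
\]
The first term appears unchanged in \eqref{eq:TDG_inherits_DG}. For the second term we apply \cref{thm:transfer_fixed_points}. Its hypotheses hold for two reasons. First, $u_h^\DG\in\IW_{\bar\gamma}$ satisfies the DG equation \eqref{eq:ns_dg}. Second, the Banach regime of \cref{thm:NS_existence_Banach} gives a unique Trefftz fixed point, and its proof gives a contraction constant $L_{\T}\le\frac12$ on $\IB_{\bar\gamma,M}$. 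This yields
\[
\norm{u_h^{\IT}-u_h^{\DG}}_{1,h}\le \frac{2}{\nu^{\frac12}\stabtrefftz{u_h^\DG}}\bigl\|\mathcal R^{\loc}_{h,u_h^\DG}(\up_h^\DG)\bigr\|_{\IQ'} .
\]

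One subtlety concerns where the contraction is used. The proof of \cref{thm:transfer_fixed_points} applies the Lipschitz bound of $S_r$ between $u_h^{\IT}$ and $u_h^\DG$, while the contraction was only established on $\IB_{\bar\gamma,M}$. This does not matter, because \cref{lem:Sr_Lip_Banach} is pointwise and its first argument can be taken to be $w_1=u_h^{\IT}\in\IB_{\bar\gamma,M}$. Then \eqref{eq:Banach_resolution} and \eqref{eq:Banach_small_data} bound the Lipschitz prefactor by $\frac12$ for an arbitrary $w_2=u_h^\DG\in\IW_{\bar\gamma}$. I will state this explicitly, since it is what makes $(1-L_\T)^{-1}\le 2$ legitimate.

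Next we need a uniform lower bound on $\stabtrefftz{u_h^\DG}$, and I expect this to be the main obstacle. \cref{cor:trefftz_full_infsup_uniform} is stated only at the Trefftz fixed point, whereas here the convector is $u_h^\DG$. I would repeat its argument with the assumption $\|u_h^\DG\|_{1,h}/\nu\le M_\DG$ in place of the energy bound. By \cref{rem:oseen_constants}, $\contoseen{u_h^\DG}\lesssim 1+M_\DG$. Inserting this into the Part~I formula for the full inf-sup constant, which depends on $\alpha_0$, on $\stabtrefftzr{*}$-type uniform quantities and multiplicatively on $\contoseen{w_h}$, gives $\stabtrefftz{u_h^\DG}\ge c(M_\DG,\bar\gamma,\ldots)>0$. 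This lower bound is independent of $h$ and is absorbed into the stated $\stabtrefftz{*}$ and $M_\DG$ dependence of $C$.

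Finally, we insert \cref{lem:local_defect_DG} with $w_h=u_h^\DG$. Since $|w_h|_{h,d}\le\bar\gamma$, the factor $1+|w_h|_{h,d}$ is bounded by $1+\bar\gamma$. The weight $|w_h|_{h,d}$ inside $\mathfrak E_h^{\loc}$ is kept as written. Collecting $2(1+\bar\gamma)/(\nu^{\frac12}\stabtrefftz{u_h^\DG})$ and the hidden constants into $C$ gives \eqref{eq:TDG_inherits_DG}. The factor $\nu^{-\frac12}$ must either go into $C$ or be regarded as fixed data; I would note that the paper's $C$ is allowed to depend on $\nu$ implicitly through the Banach constants. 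The closing sentence of the corollary then follows at once: if $\mathfrak E_h^{\loc}$ and $\norm{\up-\up_h^\DG}_{\IX,\ast}$ are of the same order as the DG error, so is the right-hand side.
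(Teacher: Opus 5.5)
Your proposal is correct and takes the same route as the paper. You use the triangle inequality, then \cref{thm:transfer_fixed_points}, then an $h$-uniform lower bound on $\stabtrefftz{u_h^\DG}$ from $M_\DG$ and $\bar\gamma$ (as in \cref{cor:trefftz_full_infsup_uniform}), and finally \cref{lem:local_defect_DG} with $w_h=u_h^\DG$. Your two side remarks are valid refinements of points the paper's proof passes over silently: the contraction is only needed for the pair with $w_1=u_h^{\IT}\in\IB_{\bar\gamma,M}$, so $L_{\mathrm{T}}\le\frac12$ holds even if $u_h^\DG\notin\IB_{\bar\gamma,M}$, and the factor $\nu^{-\frac12}$ is tacitly absorbed into $C$.
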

\begin{proof}
By the triangle inequality and \cref{thm:transfer_fixed_points},
\begin{align*}
\norm{u-u_h^{\IT}}_{1,h}
&\le
\norm{u-u_h^{\mathrm{DG}}}_{1,h}
+
\norm{u_h^{\IT}-u_h^{\mathrm{DG}}}_{1,h}
\\
&\le
\norm{u-u_h^{\mathrm{DG}}}_{1,h}
+
\frac{1}{(1-L_{\mathrm{T}})\nu^{\frac12}\stabtrefftz{u_h^{\mathrm{DG}}}}
\bigl\|
\mathcal R_{h,u_h^{\mathrm{DG}}}^{\mathrm{loc}}(\up_h^{\mathrm{DG}})
\bigr\|_{\IQ'}.
\end{align*}
Using the assumptions on $u_h^{\DG}$ we can find a uniform lower bound for $\stabtrefftz{u_h^{\mathrm{DG}}}$, depending on $\bar\gamma$ and $M_\DG$.
Applying \cref{lem:local_defect_DG} with $w_h=u_h^{\mathrm{DG}}$ yields
\eqref{eq:TDG_inherits_DG}.
\end{proof}

This result allows one to transfer convergence results from any DG method
of the form \eqref{eq:ns_dg}, provided its convection trilinear form satisfies
\cref{ass:ch}.
Two such results are available in the literature:
\begin{itemize}
    \item A detailed analysis of the global DG discretization for the Navier--Stokes problem is given in \cite{CKS05} a trilinearform fitting our assumptions.
        Under a smallness assumption on the data, the authors prove optimal convergence rates. For simplicity, the presentation in \cite{CKS05} focuses on the case $d=2$, but the analysis extends to $d=3$ as well.
    \item In \cite{DE10,DiPietroErn}, two different, suitable, trilinearforms are established and convergence of the corresponding DG approximations is established.
\end{itemize}

A pressure estimate is not pursued further here.
Once the piecewise-constant pressure part is controlled uniformly, the same transfer argument shows that the full Trefftz-DG pressure inherits the convergence behaviour of the DG pressure in the discrete norm $\norm{\cdot}_{0,h}$.

\section{Numerical results}\label{sec:numerics}

In the numerical examples we choose the discrete trilinear form $\blfconv{w}$ as a discrete counterpart of Temam's modification given by
\begin{align*}
\blfconv{w}(u,v)
:= (w\cdot\nabla u,  v)_\Omega
- \big(\avg{w}\cdot n_F \jmp{u}, \avg{v}\big)_\Fhi 
+ \frac12\big((\nabla\cdot w)u,v\big)_\Omega
-\frac12\big(\jmp{w}\cdot n_F, \avg{u\cdot v}\big)_\Fh .
\end{align*}
The choice is further discussed in \cite{DE10,DiPietroErn} and shown to satisfy our assumptions in Part~I, \cite{SVLL1_ARXIV_2026}.
For the implementation of the methods we are using \texttt{NGSolve} \cite{ngsolve} and \texttt{NGSTrefftz} \cite{ngstrefftz}.
Replication data are available in \cite{stocker_2026_20490547}.

\subsection{Kovasznay flow}\label{sec:kovasznay}
We consider the exact solution known as Kovasznay flow, which is given by
\begin{eqs}
    u = \begin{pmatrix}1-\exp(\kappa x) \cos(2\pi y)\\ \kappa/(2\pi)\exp(\kappa x) \sin(2\pi y)\end{pmatrix}, \quad
    p = -0.5\exp(2\kappa x) + \bar{p}
\end{eqs}
with $\kappa = (2\nu)^{-1}-\sqrt{(4\nu^2)^{-1}+4\pi^2}$
where $\bar{p}$ is a constant chosen such that $\int_\Omega p = 0$.
The exact solution satisfies the Navier-Stokes equations with $f = 0$ and $g = 0$.
We enforce inhomogeneous Dirichlet boundary conditions matching the exact solution.

In the experiments we choose the tolerance in the Picard iteration \cref{alg:picard} to be $\texttt{TOL} = 10^{-10}$.
If not otherwise specified the penalty parameter is chosen as $\lambda = 50k^2$.
The viscosity is set to $\nu = 1.0$.
We compare the Trefftz method with $\IT$ elements against the standard DG method with $\IP^k$ elements for $k=2,3,4,5$.

\begin{figure}[ht!]
\begin{center}    
\resizebox{.98\linewidth}{!}{
    \begin{tikzpicture}%
    \begin{groupplot}[%
      group style={%
        group name={my plots},
        group size=2 by 1,
        horizontal sep=2cm,
      },
    legend style={
        legend columns=8,
        at={(0.5,-0.2)},
        draw=none
    },
    ymajorgrids=true,
    grid style=dashed,
    cycle list name=paulcolors4,
    ]      
    \nextgroupplot[title={velocity}, ymode=log,xmode=log,x dir=reverse, ylabel={$L^2(\Omega)$-error},xlabel={$h$}]
    \foreach \p in {2,3,4,5}{
        \addplot+[discard if not={trefftz}{1},discard if not={p}{\p},discard if not={nu}{1.0}] table [x=h, y=ul2error, col sep=comma] {navst.csv};
    }
    \foreach \p in {2,3,4,5}{
        \addplot+[discard if not={trefftz}{0},discard if not={p}{\p},discard if not={nu}{1.0}] table [x=h, y=ul2error, col sep=comma] {navst.csv};
    }
    \addplot[domain=0.06:1.0] {exp(-3*ln(1/x)+1.0)};
    \addplot[domain=0.06:1.0] {exp(-4*ln(1/x)-0.5)};
    \addplot[domain=0.06:1.0] {exp(-5*ln(1/x)-2.0)};
    \addplot[domain=0.06:1.0] {exp(-6*ln(1/x)-4.0)};

    \nextgroupplot[title={pressure},ymode=log,xmode=log,x dir=reverse, ylabel={$L^2(\Omega)$-error},xlabel={$h$}]
    \foreach \p in {2,3,4,5}{
        \addplot+[discard if not={trefftz}{1},discard if not={p}{\p},discard if not={nu}{1.0}] table [x=h, y=pl2error, col sep=comma] {navst.csv};
    }
	\addlegendimage{solid}
	\addlegendimage{dashed}
    \foreach \p in {2,3,4,5}{
        \addplot+[discard if not={trefftz}{0},discard if not={p}{\p},discard if not={nu}{1.0}] table [x=h, y=pl2error, col sep=comma] {navst.csv};
    }
    \addplot[dashed,domain=0.06:1.0] {exp(-2*ln(1/x)+4.5)};
    \addplot[dashed,domain=0.06:1.0] {exp(-3*ln(1/x)+3.5)};
    \addplot[dashed,domain=0.06:1.0] {exp(-4*ln(1/x)+2.0)};
    \addplot[dashed,domain=0.06:1.0] {exp(-5*ln(1/x)+1.0)};
    \legend{$p=2$,$p=3$,$p=4$,$p=5$,$\mathcal O(h^{-(p+1)})$,$\mathcal O(h^{-p})$}
    \end{groupplot}
\end{tikzpicture}}
\end{center} 
\caption{
    Convergence of the velocity and pressure in the $L^2$-norm for the Kovasznay flow problem.
    The results for the Trefftz method are shown in solid lines, while the results for the standard finite element method are shown in dashed lines. 
    The expected convergence rates are shown as solid and dashed lines black lines.
}
\label{fig:ex1}
\end{figure}

Results for the convergence of the velocity and pressure in the $L^2$-norm are shown in \cref{fig:ex1}.
The results for the Trefftz method are shown in solid lines, while the results for the standard finite element method are shown in dashed lines.
Both methods show the expected convergence rates.

\begin{table}[ht!]\centering
{\setlength{\tabcolsep}{3pt}\begin{filecontents*}{navst.csv}
hnr,nu,h,p,alpha,trefftz,ul2error,ul2eoc,pl2error,pl2eoc,iters,totaltime
0,1.0,1.0,2,50,1,8.022866516018006,0,150.98203912865355,0,17,0.3687002658843994
1,1.0,0.5,2,50,1,1.2718612613080502,2.657176488563732,83.30023076100545,0.8579845389876142,12,0.4603145122528076
2,1.0,0.25,2,50,1,0.1571631604209591,3.016606315702665,18.620116231150032,2.161458413630056,12,1.3488433361053467
3,1.0,0.125,2,50,1,0.018469420912427,3.089052546514241,5.062378983069438,1.8789746579993165,11,4.629936218261719
4,1.0,0.0625,2,50,1,0.0022098394233573027,3.063125186999198,1.2794272437276628,1.9843174064502482,11,17.714234113693237
0,1.0,1.0,3,50,1,1.5908549831864496,0,54.737474600777155,0,12,0.29699039459228516
1,1.0,0.5,3,50,1,0.10266164913567621,3.95383308447757,10.260915704097206,2.4153693896354773,12,1.1339812278747559
2,1.0,0.25,3,50,1,0.00859426256724237,3.5783796765587135,1.3152693250298508,2.963729332989417,11,2.155832052230835
3,1.0,0.125,3,50,1,0.0005187794395839304,4.050180645690376,0.16482417159912466,2.9963585119500746,11,7.346860408782959
4,1.0,0.0625,3,50,1,3.0494402590563456e-05,4.088504945659761,0.02047625716680936,3.0089038945774944,11,32.7329843044281
0,1.0,1.0,4,50,1,0.3584461301040584,0,26.117506585787496,0,13,0.4284029006958008
1,1.0,0.5,4,50,1,0.019112638832023233,4.229157430187789,2.0775883963681756,3.652035404092977,12,0.9954118728637695
2,1.0,0.25,4,50,1,0.0007047265401529284,4.761319623671085,0.1834374751309046,3.501549553745472,11,3.1761412620544434
3,1.0,0.125,4,50,1,2.2056281565147334e-05,4.997802052513274,0.012259301896550832,3.903339670127635,11,11.844591617584229
4,1.0,0.0625,4,50,1,6.61439574800075e-07,5.059436412994442,0.000769698334098505,3.993439892365798,11,57.9602255821228
0,1.0,1.0,5,50,1,0.1314006989492496,0,8.01867524155042,0,13,1.01104736328125
1,1.0,0.5,5,50,1,0.0032022798509269506,5.358729747321981,0.4731536090955126,4.082983374973067,11,1.6639535427093506
2,1.0,0.25,5,50,1,5.3771130060405626e-05,5.896123790139726,0.01725890448398846,4.7768958329934295,11,4.836868047714233
3,1.0,0.125,5,50,1,7.604655551238688e-07,6.143805078217865,0.0005399379858364726,4.998403364383017,11,17.904821634292603
4,1.0,0.0625,5,50,1,1.1366721068857385e-08,6.063994854806908,1.609274708766608e-05,5.068311191921301,11,89.28086042404175
0,1.0,1.0,2,50,0,6.747173441730981,0,260.96413566953646,0,16,0.31040048599243164
1,1.0,0.5,2,50,0,1.1513975725087198,2.5508971722584195,82.82564428441609,1.6557021247727,11,0.49227380752563477
2,1.0,0.25,2,50,0,0.1491422616634568,2.9486250461146324,15.45084813669332,2.4223914872559367,12,1.855407953262329
3,1.0,0.125,2,50,0,0.01785648102520549,3.062169422683102,3.286342272500177,2.2331313836418754,11,6.35813045501709
4,1.0,0.0625,2,50,0,0.002162740306145241,3.045515449384455,0.7187580924427414,2.192904545438517,11,28.626416444778442
0,1.0,1.0,3,50,0,1.4122040734205588,0,114.67010978727414,0,11,0.33110976219177246
1,1.0,0.5,3,50,0,0.09878872696118188,3.837458351513051,8.69023370305696,3.721950598049168,12,0.9182047843933105
2,1.0,0.25,3,50,0,0.008108187457863325,3.6068950727134554,1.2499953906463526,2.797472200430756,11,3.646803617477417
3,1.0,0.125,3,50,0,0.0004967235390908158,4.028864422715711,0.1472012114958701,3.086061324734583,11,19.20455813407898
4,1.0,0.0625,3,50,0,2.9888136525744573e-05,4.054798260499617,0.017096968056537626,3.105977136976708,11,79.9438316822052
0,1.0,1.0,4,50,0,0.2052181783471458,0,17.59730921876774,0,12,0.5340712070465088
1,1.0,0.5,4,50,0,0.012164671242520138,4.076389295158318,1.5693088401239121,3.4871536372630243,11,1.8476898670196533
2,1.0,0.25,4,50,0,0.00039323524393419714,4.951160891042408,0.08088647581302047,4.278086987764265,11,8.98195743560791
3,1.0,0.125,4,50,0,1.2685690189063897e-05,4.954118710630338,0.004662191867966389,4.116818221264007,11,42.07612705230713
4,1.0,0.0625,4,50,0,3.9470769956477175e-07,5.0062735417113915,0.000269059051569233,4.1150136311484395,11,183.18919277191162
0,1.0,1.0,5,50,0,0.03747335640028913,0,2.5958937959270036,0,12,0.8709478378295898
1,1.0,0.5,5,50,0,0.0011090003611157254,5.07853346186515,0.09227908219442046,4.814083895217005,11,3.670307159423828
2,1.0,0.25,5,50,0,1.8247748930353386e-05,5.925397523101133,0.004071620931279646,4.502328497498988,11,19.8352472782135
3,1.0,0.125,5,50,0,3.012044230168945e-07,5.920831736394782,0.0001367944345323486,4.895521811840231,11,83.69158148765564
4,1.0,0.0625,5,50,0,4.747695268609781e-09,5.987371806705032,4.307410131452969e-06,4.989045028495349,11,373.2896592617035
\end{filecontents*}
\pgfplotstabletypeset[
col sep=comma,
fixed,fixed zerofill=true,
every head row/.style={before row=\toprule,after row=\midrule},
every last row/.style={after row=\bottomrule},
create on use/nhnr/.style={create col/set list={1.0,$2^{-1}$,$2^{-2}$,$2^{-3}$,$2^{-4}$}},
columns={nhnr,totaltime,iters,totaltime,iters,totaltime,iters,totaltime,iters,totaltime,iters,totaltime,iters},
display columns/0/.style={column name={Meshsize},string type,column type=l,discard if not={trefftz}{1},discard if not={p}{2},discard if not={nu}{1.0}},
display columns/1/.style={column name={$\IT^3$},discard if not={trefftz}{1},discard if not={p}{3},discard if not={nu}{1.0},column type/.add={r@{\hspace{2pt}}}{}},
display columns/2/.style={column name={},fixed,precision=0,fixed zerofill=false,discard if not={trefftz}{1},discard if not={p}{3},discard if not={nu}{1.0},postproc cell content/.append code={\pgfkeysalso{@cell content/.add={(}{)}}},column type/.add={r@{\hspace{10pt}}}{}},
display columns/3/.style={column name={$\IP^3$},discard if not={trefftz}{0},discard if not={p}{3},discard if not={nu}{1.0},column type/.add={r@{\hspace{2pt}}}{}},
display columns/4/.style={column name={},fixed,precision=0,fixed zerofill=false,discard if not={trefftz}{0},discard if not={p}{3},discard if not={nu}{1.0},postproc cell content/.append code={\pgfkeysalso{@cell content/.add={(}{)}}},column type/.add={r@{\hspace{10pt}}}{}},
display columns/5/.style={column name={$\IT^4$},discard if not={trefftz}{1},discard if not={p}{4},discard if not={nu}{1.0},column type/.add={r@{\hspace{2pt}}}{}},
display columns/6/.style={column name={},fixed,precision=0,fixed zerofill=false,discard if not={trefftz}{1},discard if not={p}{4},discard if not={nu}{1.0},postproc cell content/.append code={\pgfkeysalso{@cell content/.add={(}{)}}},column type/.add={r@{\hspace{10pt}}}{}},
display columns/7/.style={column name={$\IP^4$},discard if not={trefftz}{0},discard if not={p}{4},discard if not={nu}{1.0},column type/.add={r@{\hspace{2pt}}}{}},
display columns/8/.style={column name={},fixed,precision=0,fixed zerofill=false,discard if not={trefftz}{0},discard if not={p}{4},discard if not={nu}{1.0},postproc cell content/.append code={\pgfkeysalso{@cell content/.add={(}{)}}},column type/.add={r@{\hspace{10pt}}}{}},
display columns/9/.style={column name={$\IT^5$},discard if not={trefftz}{1},discard if not={p}{5},discard if not={nu}{1.0},column type/.add={r@{\hspace{2pt}}}{}},
display columns/10/.style={column name={},fixed,precision=0,fixed zerofill=false,discard if not={trefftz}{1},discard if not={p}{5},discard if not={nu}{1.0},postproc cell content/.append code={\pgfkeysalso{@cell content/.add={(}{)}}},column type/.add={r@{\hspace{10pt}}}{}},
display columns/11/.style={column name={$\IP^5$},discard if not={trefftz}{0},discard if not={p}{5},discard if not={nu}{1.0},column type/.add={r@{\hspace{2pt}}}{}},
display columns/12/.style={column name={},fixed,precision=0,fixed zerofill=false,discard if not={trefftz}{0},discard if not={p}{5},discard if not={nu}{1.0},postproc cell content/.append code={\pgfkeysalso{@cell content/.add={(}{)}}},column type=r},
]{navst.csv}}
\caption{Total timings in seconds and iteration count for the Kovasznay flow problem on various mesh sizes and polynomial degrees.}\label{tab:ex1}
\end{table}

In \cref{tab:ex1} we show the total timings in seconds and the number of Picard iterations needed to reach the tolerance $\texttt{TOL} = 10^{-10}$.
Already on quite coarse meshes the Trefftz method is faster than the standard finite element method, especially for higher polynomial degrees.
The number of Picard iterations is comparable for both methods.

\subsection{Sensitivity to the Reynolds number}\label{sec:reynolds}
We consider the same problem as in \cref{sec:kovasznay}, but with varying viscosity $\nu$.
The goal is to investigate the sensitivity of the methods to the Reynolds number, and the dependence on the penalty parameter $\lambda$ and mesh size $h$.

In \cref{fig:reynolds} we consider $\nu=0.25^i$ for $i=0,\dots,8$, and $\lambda=2^j$ for $j=3,\dots,8$ on three different mesh sizes $h=0.5,0.25,0.125$.
We fix the polynomial degree to $p=4$ and compare the Trefftz method with the standard DG method in terms of the number of Picard iterations needed to reach the tolerance $\texttt{TOL} = 10^{-8}$.
If the method fails to converge within 100 iterations, we mark the point with a cross.

\begin{figure}[!htb]
	\resizebox{\linewidth}{!}{
		\begin{tikzpicture}
			\begin{groupplot}[%
				group style={%
					group size=3 by 2,
					horizontal sep=1em,
					vertical sep=1em,
					xticklabels at=edge bottom,
					yticklabels at=edge left,
					x descriptions at=edge bottom, 
					y descriptions at=edge left,
				},
				xlabel={Viscosity $\nu$},
				ylabel={Penalty parameter $\lambda$},
				xmode=log,
				ymode=log,
				scatter,
				point meta min=0,
				point meta max=100,
                cycle list={
                    {only marks, mark=square*, mark size=5, },
                    {only marks, mark=x, mark size=6.5, very thick, black,}
                }
			]

				\nextgroupplot[]
                \addplot+[ scatter src=explicit, discard if not={trefftz}{1}, discard if not={h}{0.5}, discard if={iters}{-1},]
				table[ col sep=comma, x=nu, y=alpha, meta=iters ] {navstnuo4.csv};
                \addplot+[  discard if not={trefftz}{1}, discard if not={h}{0.5}, discard if not={iters}{-1},]
				table[ col sep=comma, x=nu, y=alpha ] {navstnuo4.csv};
				\nextgroupplot[]
                \addplot+[ scatter src=explicit, point meta=explicit, discard if not={trefftz}{1}, discard if not={h}{0.25}, discard if={iters}{-1},]
				table[ col sep=comma, x=nu, y=alpha, meta=iters ] {navstnuo4.csv};
                \addplot+[  discard if not={trefftz}{1}, discard if not={h}{0.25}, discard if not={iters}{-1},]
				table[ col sep=comma, x=nu, y=alpha ] {navstnuo4.csv};
				\nextgroupplot[]
                \addplot+[ scatter src=explicit,  point meta=explicit, discard if not={trefftz}{1}, discard if not={h}{0.125}, discard if={iters}{-1},]
				table[ col sep=comma, x=nu, y=alpha, meta=iters ] {navstnuo4.csv};
                \addplot+[  discard if not={trefftz}{1}, discard if not={h}{0.125}, discard if not={iters}{-1},]
				table[ col sep=comma, x=nu, y=alpha ] {navstnuo4.csv};

				\nextgroupplot[]
                \addplot+[ scatter src=explicit,  point meta=explicit, discard if not={trefftz}{0}, discard if not={h}{0.5}, discard if={iters}{-1},]
				table[ col sep=comma, x=nu, y=alpha, meta=iters ] {navstnuo4.csv};
                \addplot+[  discard if not={trefftz}{0}, discard if not={h}{0.5}, discard if not={iters}{-1},]
				table[ col sep=comma, x=nu, y=alpha ] {navstnuo4.csv};
				\nextgroupplot[]
                \addplot+[ scatter src=explicit,  point meta=explicit, discard if not={trefftz}{0}, discard if not={h}{0.25}, discard if={iters}{-1},]
				table[ col sep=comma, x=nu, y=alpha, meta=iters ] {navstnuo4.csv};
                \addplot+[  discard if not={trefftz}{0}, discard if not={h}{0.25}, discard if not={iters}{-1},]
				table[ col sep=comma, x=nu, y=alpha ] {navstnuo4.csv};
				\nextgroupplot[ colorbar, colorbar style={at={(1.05,1.0)},anchor=west} ]
                \addplot+[ scatter src=explicit,  point meta=explicit, discard if not={trefftz}{0}, discard if not={h}{0.125}, discard if={iters}{-1},]
				table[ col sep=comma, x=nu, y=alpha, meta=iters ] {navstnuo4.csv};
                \addplot+[  discard if not={trefftz}{0}, discard if not={h}{0.125}, discard if not={iters}{-1},]
				table[ col sep=comma, x=nu, y=alpha ] {navstnuo4.csv};

			\end{groupplot}

			\node[font=\bfseries, anchor=south] at ($(group c1r1.north)+(0,4mm)$) {Meshsize $h=0.5$};
			\node[font=\bfseries, anchor=south] at ($(group c2r1.north)+(0,4mm)$) {Meshsize $h=0.25$};
			\node[font=\bfseries, anchor=south] at ($(group c3r1.north)+(0,4mm)$) {Meshsize $h=0.125$};
			\node[font=\bfseries, rotate=90, anchor=south] at ($(group c1r1.west)+(-12mm,0)$) {Trefftz method};
			\node[font=\bfseries, rotate=90, anchor=south] at ($(group c1r2.west)+(-12mm,0)$) {Standard DG method};

		\end{tikzpicture}}
	\caption{
		    Number of Picard iterations needed to reach the tolerance $\texttt{TOL} = 10^{-8}$ for varying viscosity $\nu$ and penalty parameter $\lambda$.
            From left to right, the columns correspond to increasing mesh refinement, $h=0.5,0.25,0.125$.
            The results for the Trefftz method are shown in the top row, while the results for the standard DG method are shown in the bottom row.
	}
	\label{fig:reynolds}
\end{figure}

For both methods, the number of iterations increases as $\nu$ decreases, and larger values of~$\lambda$ generally improve robustness.
Moreover, refining the mesh enlarges the region in parameter space in which convergence is obtained.
The standard DG method is consistently more robust with respect to decreasing~$\nu$: on each mesh it still converges for smaller viscosities and smaller penalty parameters than the Trefftz method.
By contrast, whenever both methods converge, the observed Picard iteration counts are typically of comparable size.

\subsection{The Sch\"afer--Turek benchmark}\label{sec:schafer_turek}
We consider the classical Sch\"afer--Turek DFG benchmark 2D-1 for steady flow past a cylinder, as introduced in \cite{stbench}.
The stationary incompressible Navier--Stokes equations are posed on
\[
\Omega = [0,2.2]\times[0,0.41]\setminus B_{0.05}(0.2,0.2),
\]
with viscosity $\nu=10^{-3}$.
We impose no-slip boundary conditions on the upper and lower channel walls as well as on the cylinder boundary, and prescribe the parabolic inflow profile
\[
u(0,y)=\left(\frac{4U_{\max}\,y(0.41-y)}{0.41^2},\,0\right),
\qquad U_{\max}=0.3.
\]
The mean inflow velocity is therefore $U_{\mathrm{mean}}=\frac23 U_{\max}=0.2,$ and with cylinder diameter $L=0.1$ this yields the Reynolds number $\mathrm{Re}=\frac{U_{\mathrm{mean}}L}{\nu}=20.$

The quantities measured in \cite{stbench} are the drag and lift coefficients, the pressure difference.
Let $\eta$ denote the outer unit normal of the cylinder boundary $S$ (pointing into the fluid).
Following the benchmark convention, we define the (non-symmetric) stress tensor
$\sigma(u,p) = \nu \nabla u - p I,$
and the drag/lift force vector by
\[
\binom{F_D}{F_L} := \int_S \sigma(u,p)\,\eta \, ds.
\qquad
C_D := \frac{2}{U_{\mathrm{mean}}^2\,L}F_D,
\qquad
C_L := \frac{2}{U_{\mathrm{mean}}^2\,L}F_L,
\]
where $C_D$ and $C_L$ are the dimensionless drag and lift coefficients.
Moreover, we consider the pressure difference
\[
p_{\mathrm{diff}}:=p(a_1)-p(a_2),
\qquad
a_1=(0.15,0.2),\quad a_2=(0.25,0.2).
\]

As reference values we use the computations reported in \cite{stbench2},
\[
C_D^{\mathrm{ref}}=5.57953523384,\qquad
C_L^{\mathrm{ref}}=0.010618948146,\qquad
p_{\mathrm{diff}}^{\mathrm{ref}}=0.11752016697.
\]
In addition, the original benchmark paper gives the admissible intervals
\[
C_D\in[5.5700,5.5900],\qquad
C_L\in[0.0104,0.0110],\qquad
p_{\mathrm{diff}}\in[0.1172,0.1176].
\]

We solve the problem on the mesh with $h=0.07$ for polynomial degrees $p=3,\dots,8$, and compare the Trefftz-DG and standard DG discretizations.
The resulting values for $C_D$, $C_L$, and $p_{\mathrm{diff}}$, together with their absolute errors with respect to the reference values above, are reported in \cref{tab:st}.

\begin{table}[!ht]\centering
\begin{filecontents*}{st.csv}
hnr,h,p,CD,eCD,CL,eCL,pdiff,epdiff,trefftz
0,0.07,3,5.374162814143808,0.20537241969619213,0.02532770292507221,0.01470875477907221,0.1113544360988439,0.0061657308711561,True
0,0.07,4,5.586510867487598,0.006975633647598123,0.024418796515746615,0.013799848369746615,0.11273408475437707,0.004786082215622933,True
0,0.07,5,5.581095868353134,0.0015606345131340404,0.011743417116890205,0.001124468970890205,0.11629739234630884,0.001222774623691164,True
0,0.07,6,5.579659497232792,0.00012426339279247856,0.010025350770845702,0.0005935973751542978,0.11735069999465886,0.00016946697534114818,True
0,0.07,7,5.579641304881067,0.00010607104106696852,0.010223627046078354,0.00039532109992164617,0.11761395362533913,9.378665533912867e-05,True
0,0.07,8,5.579488300261416,4.693357858354119e-05,0.010508003818347943,0.0001109443276520565,0.11756039683057168,4.0229860571672216e-05,True
0,0.07,3,5.467909532209265,0.11162570163073493,-0.09428123952294777,0.10490018766894776,0.11818727365629088,0.000667106686290872,False
0,0.07,4,5.5400647335571165,0.039470500282883236,-0.007952852498202444,0.018571800644202444,0.11842410783458189,0.0009039408645818842,False
0,0.07,5,5.574418339994337,0.005116893845662496,0.010042850949581524,0.0005760971964184763,0.11784333925104773,0.00032317228104772333,False
0,0.07,6,5.579412230836806,0.00012300300319356694,0.010373525572106402,0.0002454225738935979,0.11759919154095744,7.90245709574311e-05,False
0,0.07,7,5.579755614031726,0.00022038019172665457,0.010527291564966009,9.165658103399074e-05,0.11753831811503973,1.815114503972426e-05,False
0,0.07,8,5.579699909835494,0.00016467599549407907,0.010611164936752777,7.78320924722277e-06,0.11752351645699391,3.34948699390325e-06,False
\end{filecontents*}
\pgfplotstabletypeset[
col sep=comma,
fixed,
fixed zerofill=true,
columns={method,p,CD,eCD,CL,eCL,pdiff,epdiff},
create on use/method/.style={
    create col/assign/.code={
        \ifnum\pgfplotstablerow=0
            \pgfkeyssetvalue{/pgfplots/table/create col/next content}{\multirow{6}{*}{TDG}}
        \else\ifnum\pgfplotstablerow=6
            \pgfkeyssetvalue{/pgfplots/table/create col/next content}{\multirow{6}{*}{DG}}
        \else
            \pgfkeyssetvalue{/pgfplots/table/create col/next content}{}
        \fi\fi
    }
},
every head row/.style={before row=\toprule,after row=\midrule},
every row no 5/.style={after row=\midrule},
every last row/.style={after row=\bottomrule},
display columns/0/.style={
    column name={method},
    string type,
    column type=l
},
display columns/1/.style={column name={$p$},fixed,precision=0,column type=r},
display columns/2/.style={column name={$C_D$},fixed,precision=6,column type=r},
display columns/3/.style={column name={$|e(C_D)|$},sci,sci zerofill,precision=2,column type=r},
display columns/4/.style={column name={$C_L$},fixed,precision=6,column type=r},
display columns/5/.style={column name={$|e(C_L)|$},sci,sci zerofill,precision=2,column type=r},
display columns/6/.style={column name={$\Delta p$},fixed,precision=6,column type=r},
display columns/7/.style={column name={$|e(\Delta p)|$},sci,sci zerofill,precision=2,column type=r}
]{st.csv}
\caption{Drag/lift coefficients and pressure difference with errors.}
\label{tab:st}
\end{table}

The results in \cref{tab:st} show that both methods capture the benchmark quantities with high accuracy.
In particular, both discretizations approach the reference values rapidly as the polynomial degree increases, and from $p=6$ onward the remaining errors are already very small.
The comparison between Trefftz-DG and standard DG is mixed but overall balanced:
Trefftz-DG tends to be slightly more accurate for the drag coefficient $C_D$,
while standard DG is consistently more accurate for the pressure difference $p_{\mathrm{diff}}$.
For the lift coefficient $C_L$, Trefftz-DG has an advantage at lower polynomial degrees, whereas standard DG is more accurate at higher orders.
Thus, for this benchmark the two discretizations perform very similarly overall.

\begin{figure}[!ht]
\centering
\includegraphics[width=0.8\textwidth, trim=0 7cm 0 7cm, clip]{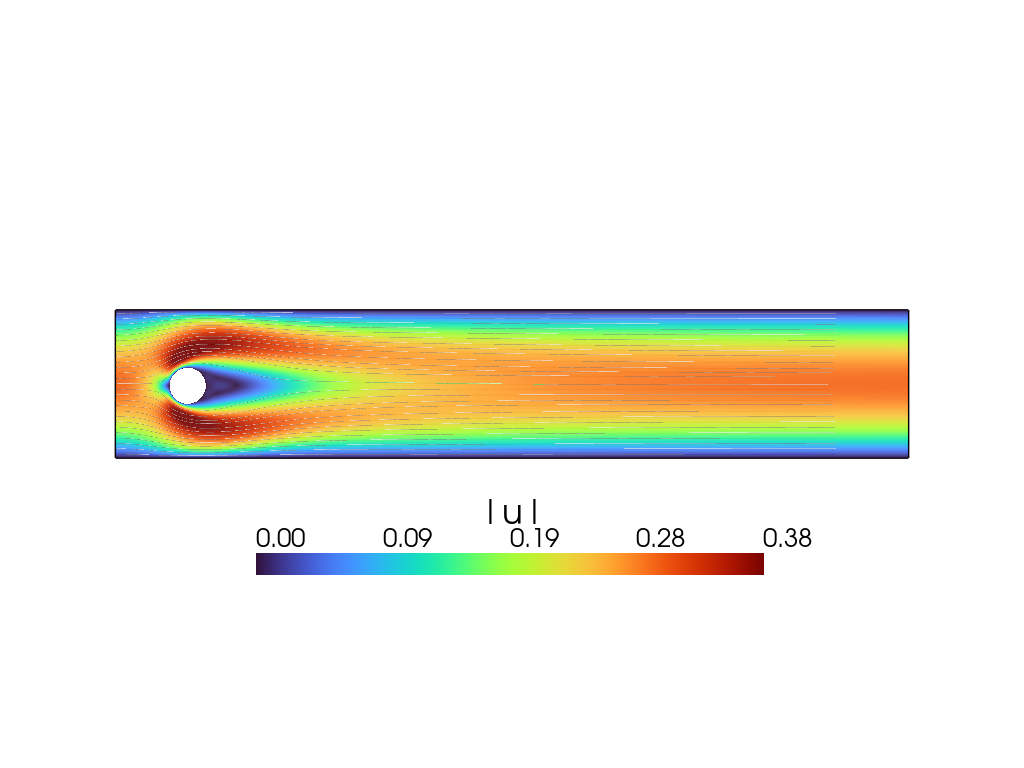}
\caption{
    Streamlines of the velocity field for the Sch\"afer--Turek benchmark problem computed with the Trefftz-DG method with polynomial degree $k=5$.
}
\label{fig:st_streamlines}
\end{figure}

A streamline plot of the computed flow is shown in \cref{fig:st_streamlines}; it exhibits the expected steady recirculation region downstream of the cylinder.

\section*{Acknowledgements}
\sloppy{
This research was funded in part by the Austrian Science Fund (FWF) \href{https://doi.org/10.55776/ESP4389824}{10.55776/ESP4389824}.
For open access purposes, the author has applied a CC BY public copyright license to any author-accepted manuscript version arising from this submission.}

\bibliographystyle{abbrvurl}
\bibliography{bib}

\end{document}